\newtheorem{proposition}{Proposition}[section]
\newtheorem{theorem}[proposition]{Theorem}
\newtheorem{lemma}[proposition]{Lemma}
\theoremstyle{definition}
\newtheorem{remark}[proposition]{Remark}
\numberwithin{equation}{section}
\def\E{\boldsymbol{\eps}}
\def\Ik{I^{\lambda,\vk}_{\E}}
\def\k{\varkappa}
\def\vk{\varkappa}
\def\eps{\varepsilon}
\def\R {\mathbb{R}}
\def\N {\mathbb{N}}
\def\U {{\mathcal U}}
\def \and{\quad\text{and}\quad}
\def \no#1#2#3 {{\bf #1} (#3), #2.}
\def \eds#1#2#3 {#1, #2, #3.}
\title[Global minimizers of coexistence for competing species]{Global minimizers of coexistence for competing species}
\author[Monica Conti and Veronica Felli]{}
 \email{monica.conti@polimi.it}
 \email{veronica.felli@unimib.it}
\begin{document}
\maketitle

\centerline{\scshape Monica Conti} \medskip {\footnotesize
  \centerline{Politecnico di Milano - Dipartimento di Matematica ``F.\
    Brioschi''} \centerline{Via Bonardi 9, 20133 Milano, Italy} }
\bigskip
\centerline{\scshape Veronica Felli} \medskip {\footnotesize
  \centerline{Universit\`a di Milano Bicocca - Dipartimento di
    Matematica e Applicazioni} \centerline{Via Cozzi 53, 20125
    Milano, Italy} }

\bigskip

\begin{abstract}
  A class of variational models describing ecological systems of $k$
  species competing for the same resources is investigated.  The
  occurrence of coexistence in minimal energy solutions is discussed and positive
  results are proven for suitably differentiated internal dynamics.
\end{abstract}

\section{Introduction}
This paper is focused on a class of variational problems suitable for
studying the dynamic of segregation of $k$ organisms which share the
same territory $\Omega\subset\R^N$. Calling $u_i$ the density of the
$i$-th population and $F_i(u_i)$ its internal potential, the \emph{free
  energy} of the system is
\begin{equation}\label{eq:freeenergy}
  {\mathcal E}(u_1,u_2,\dots,u_k)=\sum_{i=1}^{k} \int_\Omega\bigg(\frac12|\nabla u_i(x)|^2-F_i(u_i(x))\bigg)dx,
\end{equation}
given by the sum of the internal energies of each species.
In this context, the question of finding
  a global minimizer of the energy in the class of segregated
  states arises in a natural way. More precisely, if we define
\[\mathcal U=\left\{U=(u_1,u_2,\dots,u_k) \in
  [H^1(\Omega)\big]^k:\, u_i\geq 0,\ u_i\cdot u_j=0\text{ if
  }i\neq j,\text{ a.e. in }\Omega\right\},\]
  we are led to the following \emph{optimal partition problem}:
\begin{equation}\label{eq:opp}
\text{finding $U\in \U$ such that }\;{\mathcal E}(U)=\min_{V\in\U} {\mathcal E}(V).
\end{equation}
This problem has been recently settled in \cite{ctv-var},
in connection with strongly competing variational systems
of Lotka-Volterra type
\begin{equation}\label{eq:sistema}
-\Delta u_i= f_i(u_i)-\vk \;u_i\sum_{j\neq i}u_j^2,\quad\text{ in } \Omega,
\end{equation}
which, since the pioneering work of Volterra, constitute one of the
most studied theoretical models of population ecology, see \cite{MV}.
As a matter of fact, as the competition rate $\vk$ grows indefinitely,
the components of any (nonnegative) solution of the system tend to
separate their supports, leading to an element of $\mathcal U$; in
particular, the problem of finding minimal energy solutions of
\eqref{eq:sistema} formally translates, as $\vk\to\infty$, into
\eqref{eq:opp}, see  also \cite{ctv2,cdhmn,dhmp}.

In the understanding of the spatial behavior of interacting species, a
central problem is to establish whether \emph{coexistence} of all the
species occurs, or the internal growth leads to \emph{extinction}, that
is, configurations where one or more densities are null: in this paper
we address the question in the two different theoretical settings, endowing
the models with
null Dirichlet boundary conditions:
\begin{equation}\label{eq:dirichlet}
u_i=0\quad \text{on }\partial\Omega,\qquad i=1,\dots,k.
\end{equation}
At a first insight, extinction has to be expected for competing
systems which are, in a sense, too uniform.  For instance, in the case
of null Neumann boundary conditions, the global minimum of ${\mathcal
  E}$ on $\U$ is in general achieved by configurations where only one
species is alive, see \cite[Proposition 2.1]{cf-neumann}.
Nonetheless, a mechanism to avoid extinction can be found in the
spatial inhomogeneity of the territory. Indeed, working in a special
class of non-convex domains close to a union of $k$ disjoint balls,
the existence of \emph{local} minima of ${\mathcal E}$ where all the
species are present is proven in \cite{cf-neumann}, (see also
\cite{cf}).

As a matter of fact, if the internal energies $f_i$ are not
differentiated, extinction of global minimizers under null boundary
conditions occurs in any domain, see Section \ref{pf:nonexhistence}:
\begin{theorem}\label{th:nonexistence}
Let $\Omega$ be a  bounded Lipschitz domain and $f$ be a Lipschitz continuous function.
If  $f_i=f$ for all $i=1,\dots,k$ and $F_i(s)=\int_0^sf_i(t)\,dt$,  then any global
minimizer of ${\mathcal E}$
on ${\U}\cap H^1_0(\Omega)$ has at most one nonzero component.
 \end{theorem}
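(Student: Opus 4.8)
The plan is to reduce the vector problem to a scalar one by \emph{merging} the components. Given any $U=(u_1,\dots,u_k)\in\U\cap H^1_0(\Omega)$, put $w:=\sum_{i=1}^{k}u_i\in H^1_0(\Omega)$, $w\ge 0$. Since $u_iu_j=0$ a.e.\ for $i\ne j$ and $\nabla v=0$ a.e.\ on $\{v=0\}$ for every $v\in H^1(\Omega)$, the gradients $\nabla u_i$ are supported on pairwise (a.e.)\ disjoint sets, so $|\nabla w|^2=\sum_i|\nabla u_i|^2$ a.e.; and since $F(0)=0$ we also have $\int_\Omega F(w)=\sum_i\int_\Omega F(u_i)$. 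Writing $J(v):=\int_\Omega\big(\tfrac12|\nabla v|^2-F(v)\big)\,dx={\mathcal E}(v,0,\dots,0)$, this gives ${\mathcal E}(U)=\sum_{i=1}^{k}J(u_i)=J(w)$. As $(v,0,\dots,0)\in\U\cap H^1_0(\Omega)$ whenever $v\in\K:=\{v\in H^1_0(\Omega):v\ge 0\}$, I would conclude that $\min_{\U\cap H^1_0(\Omega)}{\mathcal E}=\min_{\K}J$ and, in particular, that the merged function $w=\sum_iu_i$ of any global minimizer $U$ is itself a global minimizer of $J$ over the convex cone $\K$. (Testing only against perturbations of a single component inside its own support, one gets for free that each $u_i$ minimizes $J$ among nonnegative $H^1_0$ functions vanishing outside $\{u_i>0\}$.)

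Next I would extract the Euler--Lagrange information for $w$. Lipschitz continuity of $f$ makes $J$ of class $C^1$ on $H^1_0(\Omega)$; a Stampacchia truncation shows a minimizer is bounded, and elliptic regularity then gives $w\in C(\overline\Omega)$ with $-\Delta w=f(w)+\mu$ for a distribution $\mu$. Minimality over the cone $\K$ yields $\langle J'(w),v-w\rangle\ge 0$ for all $v\in\K$; choosing $v=0$ and $v=2w$ gives $\langle J'(w),w\rangle=0$, hence $\langle J'(w),\varphi\rangle\ge 0$ for every $\varphi\in\K$. Consequently $\mu\ge 0$ as a Radon measure, $\int_\Omega w\,d\mu=0$ so $\operatorname{supp}\mu\subseteq\{w=0\}$, and in particular $-\Delta w=f(w)$ in the open set $\{w>0\}$.

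The conclusion then amounts to the claim that $\{w>0\}$ is either empty or connected. Indeed, if $U$ had two nonzero components $u_1,u_2$ then, $w$ being positive and continuous on the open set $\{w>0\}$, each $u_\ell=w\,\mathbf 1_{\{u_\ell>0\}}$ can lie in $H^1$ only if $\{u_\ell>0\}$ is relatively clopen in $\{w>0\}$; being nonempty and disjoint, these sets force $\{w>0\}$ to be disconnected, say $\{w>0\}=C_1\sqcup C_2\sqcup\cdots$ with $C_1,C_2\ne\emptyset$. To rule this out I would work on two components $C_1,C_2$: on each, $w$ is a positive solution of $-\Delta w=f(w)$ vanishing on the relative boundary, and since $f(s)\ge-\operatorname{Lip}(f)\,s$ for $s\ge 0$ (this is where the sign of $f$ near $0$ enters) $w$ is a positive supersolution of $-\Delta+\operatorname{Lip}(f)$ there, so Hopf's lemma yields a strictly negative outward normal derivative on the regular part of $\partial C_\ell$. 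If $\overline{C_1}\cap\overline{C_2}$ meets $\Omega$ in a set of positive capacity, comparing the two normal derivatives across that interface forces $-\Delta w$ to carry a negative singular part, contradicting $-\Delta w\ge f(w)=f(0)\ge 0$ there. Otherwise $C_1$ and $C_2$ are separated inside $\Omega$ by a portion of $\{w=0\}$ with nonempty interior, so the supersolution $w$ attains the value $0$ at interior points of the connected domain $\Omega$, and the strong minimum principle forces $w\equiv 0$, against $u_1\ne 0$. In both cases one reaches a contradiction, so $U$ has at most one nonzero component.

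The algebraic identity of the first step and the variational inequality of the second are routine; the main obstacle is the last step, the connectedness of the positivity set of the scalar minimizer. That is where the free boundary must be handled with care — a sharp interface via Hopf's lemma, a ``fat'' coincidence set via the strong minimum principle, together with the topological fact that a set separating two components of an open subset of the connected domain $\Omega$ either has nonempty interior or contains a positive-capacity interface inside $\Omega$ — and it is also where a structural assumption on $f$ near $0$ (such as $f(0)\ge 0$, natural for the ecological model) appears to be genuinely used.
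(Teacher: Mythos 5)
Your merging step and the cone variational inequality are sound and coincide with the paper's starting point (since all $f_i$ equal $f$, one has ${\mathcal E}(U)=J\big(\sum_i u_i\big)$ and the vector problem collapses to a scalar one). The genuine gap is in your third step, which is exactly where the difficulty sits. First, you apply Hopf's lemma on ``the regular part'' of $\partial C_\ell$, where $C_\ell$ is a connected component of the positivity set of an $H^1$ minimizer of an obstacle-type problem: no interior-ball condition or free-boundary regularity is established (nor available at this level of generality), so the normal derivatives you want to compare, and the ``negative singular part of $-\Delta w$ across the interface,'' are not defined objects in your argument. Second, the asserted alternative -- either $\overline{C_1}\cap\overline{C_2}$ meets $\Omega$ in a set of positive capacity, or the zero set has nonempty interior -- is not a general topological fact: when $\{w>0\}$ has more than two components, the zero set can be nowhere dense while no two components share a common interface (a Cantor-type separating set illustrates the obstruction), so the case analysis does not exhaust the possibilities. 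Third, your argument explicitly needs $f(0)\ge 0$ (both for the supersolution property and for ``$f(w)=f(0)\ge 0$ near the interface''), an assumption that is not in the statement; so even if the free-boundary analysis were completed you would be proving a different theorem.

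The idea you are missing is the paper's short-circuit: because the species share the same nonlinearity, the merged function $\tilde v_1=\sum_i v_i$ of a minimizer satisfies $J(\tilde v_1)={\mathcal E}(V)\le\mu:=\inf_{H^1_0(\Omega)}J\le J(\tilde v_1)$, so it is an \emph{unconstrained} global minimizer of $J$, hence a weak solution of $-\Delta u=f(u)$ in all of $\Omega$ -- no multiplier, no contact set, no free boundary. The strong maximum principle on the connected domain then gives $\tilde v_1\equiv 0$ or $\tilde v_1>0$ a.e., and in the latter case each $\chi_{\{v_i>0\}}=v_i/\tilde v_1$ is locally $H^1$, hence locally constant, hence constant, so at most one component is nonzero. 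Note also that even inside your own framework the heavy machinery is unnecessary: your step 2 gives $-\Delta w\ge f(w)$ distributionally on all of $\Omega$ (since your measure is nonnegative), hence $-\Delta w+Lw\ge f(0)$ with $L=\mathrm{Lip}(f)$, and once $f(0)\ge 0$ the strong maximum principle applied globally on the connected $\Omega$ (not componentwise) immediately yields $w\equiv 0$ or $w>0$ a.e., with no interface analysis at all. The sign information at $0$ that you smuggle in is what the paper's setting provides structurally (its nonlinearities vanish on $(-\infty,0]$), and it enters precisely at this point or, equivalently, in the comparison with the unconstrained infimum $\mu$.
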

 This motivates the question whether different internal laws might
 produce a mechanism to ensure coexistence.  With the aim of providing
 a first answer to this conjecture, in this note we consider the
 special situation
 when the internal energies $f_i$ are of the same type but act at different density scales, see assumption \eqref{eq:lefi}.\\
 We first investigate global minimizers for systems in the form
 \eqref{eq:sistema}, namely solutions of the energy minimization
 problem
\begin{align*}
  \min\bigg\{&(u_1,u_2,\dots,u_k) \in
  [H^1(\Omega)\big]^k\;:\; \\
  &\sum_{i=1}^k\int_\Omega\bigg(\frac12|\nabla
  u_i(x)|^2- F_i(u_i(x))\bigg)dx+\frac12\vk\sum_{\substack{i,j=1\\i\neq j}}^k u_i(x)^2u_j(x)^2\bigg\}.
\end{align*}
We prove in Theorem \ref{th:main-sistema} that any global minimizer is a coexistence state of
\eqref{eq:sistema} where all the $k$ species are present, provided the internal growths $f_i$ of $k-1$ populations act at a small density
scale, depending on $\vk$.
This is done with a great deal of generality both with respect to the domain and to the competing interaction term
appearing in \eqref{eq:sistema}, see (H1)--(H3) below, but the dependence of $f_i$'s on $\vk$ does not allow recovering a meaningful coexistence result
for the corresponding optimal partition problem \eqref{eq:opp}, see Remark \ref{rem:mod_cont}.
It is worth pointing out that the investigation of positive solutions to
competitive systems in the case of $k\geq 3$ densities is a challenging
task and only partial results are known, see
e.g. \cite{cf,cf-neumann,ctv2,dd1,dd2,lmn} and the discussions therein
for more references.

To investigate the possibility of coexistence for solutions to the optimal partition problem (\ref{eq:opp}),
following an idea developed in \cite{sweers} to
show the existence of sign-changing solutions to some elliptic
equations, we focus on  a certain class of (possibly convex) domains characterized by the
presence of an angle.
In this framework we prove, in Theorem \ref{th:main-partition}, that any
 global minimizer of ${\mathcal E}$ among segregated states has \emph{at
 least two} nontrivial positive components.  Although the result is not
 exhaustive for an arbitrary number of species, for systems of two
 populations it allows us to provide the full picture of the coexistence
 phenomenon (Theorem \ref{th:system2}).  Namely, we first prove that any minimal solution of system \eqref{eq:sistema} with $k=2$ is an equilibrium configuration where both the species are present, provided the scales of their internal energies are different but \emph{independent} from the competition rate $\vk$. Hence we perform the
 asymptotic analysis as $\vk\to\infty$ and prove that both
 components  survive as the interspecific competition becomes larger and
 larger. As a result, any minimal state of system \eqref{eq:sistema}
 converges to a spatially segregated distribution where the two
 densities coexist and solve the optimal partition problem
 \eqref{eq:opp}.

 In conclusion, our results suggest that in ecological systems
 with strong competition between the species, suitably differentiated internal energies may ensure
 coexistence in minimal energy configurations.

\section{Assumptions and main results}\label{sec:results}
Let $k\geq 2$,  $\E=(\eps_2,\dots,\eps_k)\in(0,1)^{k-1}$, $\lambda>0$,
$\vk >0$, and $\Omega$ be an open bounded set in $\R^N$ ($N\geq 2$).
We shall consider a class of competitive systems of the form
\begin{equation}\label{eq:sistema_con_H}
\begin{cases}
  -\Delta u_i(x)=\lambda f_{i,\E}(u_i(x))-\vk
\displaystyle\frac{\partial H}{\partial u_i}
  (u_1(x),u_2(x),\dots,
  u_k(x)),&\text{ in } \Omega,\\[7pt]
  u_i\in H^1_0(\Omega),\qquad i=1,\dots,k,&
\end{cases}
\end{equation}
where  $f_{i,\E}$ and $H$ satisfy the following sets of assumptions.

\medskip\noindent {\bf Assumptions on $f_{i,\E}$.}
Let $g\in C^0(\R)$  satisfying
\begin{itemize}
\item[(F1)] $g(s)=0$ for all $s\in(-\infty,0]$ and $g$ is
  right differentiable at $0$ with $g'_+(0)=1$;
\item[(F2)] there exists $\beta>0$ such that
$$
g(t)<0 \text{ for all } t>\beta \text{ and }
g(t)\geq0 \text{ for all }t\in (0,\beta);
$$
\item[(F3)] $\int_0^\beta g(s)ds=\alpha>0$.
\end{itemize}
A typical example is given by the classical logistic
  nonlinearity (see e.g. \cite{dd1}), namely $g(s)=s-s^2$ for $s\geq 0$.
We set
\begin{equation}\label{eq:lefi}
f_{i,\E}(s)=
\begin{cases}
  g(s),&\text{if }i=1,\\
  \frac{1}{\sqrt{k}\eps_i}g\left(\frac{\sqrt{k}}{\eps_i}t\right),&\text{if
  }i=2,\dots,k.
\end{cases}
\end{equation}
It is immediate to check that, for all $i\geq 2$, $f_{i,\E}$ satisfies
\begin{equation}
  \label{e:ipotesiI}
  f_{i,\E}(s)< 0 \mbox{ for all } s>\beta_i,\qquad \int_0^{\beta_i} f_{i,\E}(s)ds=
  \frac{\alpha}{k},
\end{equation}
where, for $i\geq 2$, $\beta_i=  \frac{\beta\eps_i}{\sqrt{k}}$.
For the sake of convenience, we shall refer to $\beta$ as~$\beta_1$.\\
\begin{figure}[h]
 \centering
     \includegraphics[width=7.5cm]{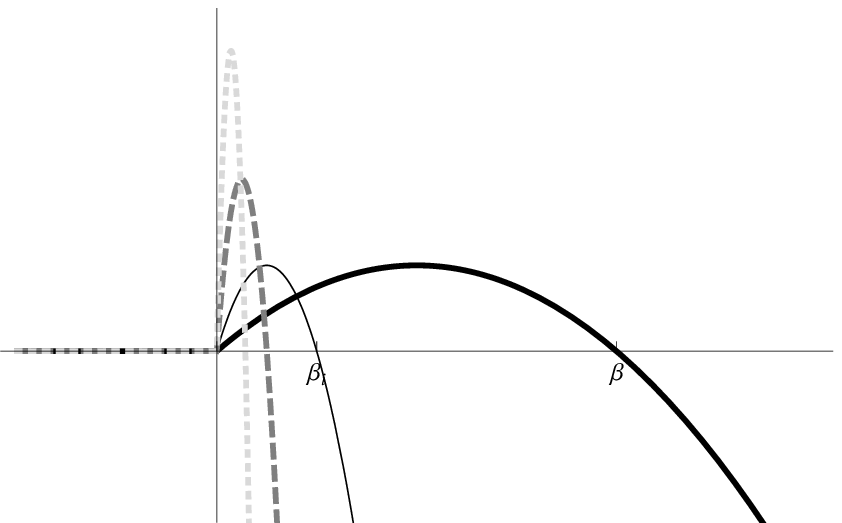}
 \caption{The nonlinearities $f_{i,\E}$ in the case $g(s)=s-s^2$
with $k=4$ and  $\E=(\frac12,\frac14,\frac17)$.
}\label{fig:nonlin}
\end{figure}
\medskip\noindent {\bf Assumptions on $H$.}
Let $H\in
C^1(\R^k)$ satisfy, for all $(s_1,s_2, \dots,s_k)\in \R^k$,
\begin{align}
  \tag{H1}
  &H(s_1,s_2, \dots,s_k)\geq 0,\\
  \tag{H2} &s_i\frac{\partial H}{\partial s_i}(s_1,s_2, \dots,s_k)\geq
  0
  \quad\text{for all }i=1,\dots,k,\\[5pt]
  \tag{H3}
&\left\{\hskip-10pt
\begin{array}{ll}
&H(s_1,s_2, \dots,s_k)=0\quad\text{if }s_i=0\text{ for at
    least $k-1$ variables},\\[5pt]
&\dfrac{\partial H}{\partial
    s_i}(s_1,s_2, \dots,s_k)=0\text{ implies that either }s_i=0\text{ or }
  s_j=0\text{ for all }j\neq i.
\end{array}
\right.
\end{align}
The first assumption states the competitive character of the
interaction term; a typical example which fits all the above
assumptions is
\begin{equation}\label{eq:HH}
H(s_1,s_2, \dots,s_k)=\frac12\sum_{\substack{i,j=1\\i\neq j}}^k s_i^2s_j^2,
\end{equation}
which is widely used in modeling population dynamics,
nonlinear optics (see e.g. \cite{AC,MMP}), and Bose-Einstein condensation
(see \cite{bose,Noris,TV}).

Setting $F_{i,\E}(t)=\int_0^t f_{i,\E}(s)ds$,
we define the internal energy of the system
 $$
I^{\lambda,\vk}_{\E}:[H^1_0(\Omega)]^k\to(-\infty,\infty]
$$
as
\begin{align}\label{eq:Ikappa}
  I^{\lambda,\vk}_{\E}(u_1,\dots,u_k)=&\sum_{i=1}^k\int_\Omega\bigg(\frac12|\nabla
  u_i(x)|^2-\lambda F_{i,\E}(u_i(x))\bigg)dx\\\nonumber&+\vk \int_{\Omega}
  H(u_1(x),u_2(x),\dots,u_k(x))\,dx.
\end{align}

Our first result states that the global minimizers of
$I^{\lambda,\vk}_{\E}$ are configurations of coexistence if the
range $\E$ of the internal growths of $k-1$ species is suitably
related to $\lambda$ and $\vk$.

\begin{theorem}\label{th:main-sistema}
  Let $g\in C^0(\R)$ satisfy (F1--3), $H\in C^1(\R^k)$ satisfy
  (H1)--(H2), and  $\Omega\subset \R^N$ be a bounded domain.
  There exists $\lambda_0>0$ such that, for every
  $\lambda>\lambda_0$ and $\varkappa\geq 0$, there exists
  $\eps_{\lambda,\vk}>0$ with the following property:
  for all $\E\in (0,\eps_{\lambda,\vk})^{k-1}$, the competing system
  (\ref{eq:sistema_con_H}) with $f_{i,\E}$ as in \eqref{eq:lefi} has a solution
  $U=(u_1,\dots,u_k)\in [H^1_0(\Omega)]^k$ satisfying
 \begin{itemize}
 \item[(i)]  $u_i\not\equiv 0$ for all  $i=1,\dots,k$;
 \item[(ii)] $0\leq u_i\leq \beta_i$ for all $i=1,\dots,k$;
 \item[(iii)] $U$ is a global minimizer of $I^{\lambda,\vk}_{\E}$, namely
$$
I^{\lambda,\vk}_{\E}(U)=\min\left\{ I^{\lambda,\vk}_{\E}(V),\;
  V\in [H^1_0(\Omega)]^k \right\}.
$$
\end{itemize}
Furthermore, $\eps_{\lambda,\vk}$ depends on the ratio $\lambda/\vk$
and tends to 0 if $\lambda/\vk\to 0$.
\end{theorem}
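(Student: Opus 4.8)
The plan is to construct the global minimizer by the direct method and then extract coexistence from the specific structure of the nonlinearities $f_{i,\E}$. First I would show that $I^{\lambda,\vk}_{\E}$ is bounded below and coercive on $[H^1_0(\Omega)]^k$: since $g$ is continuous, vanishes on $(-\infty,0]$, and is negative past $\beta$ by (F2), the primitive $F_{i,\E}(t)$ stays bounded above by $\alpha/k$ (by \eqref{e:ipotesiI}), and in fact $F_{i,\E}(t)\le C$ with $F_{i,\E}(t)\to-\infty$ linearly controlled for large $t$; together with $\vk H\ge 0$ from (H1), this gives $I^{\lambda,\vk}_{\E}(V)\ge \tfrac12\sum_i\|\nabla v_i\|_{L^2}^2 - C|\Omega|$. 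Coercivity plus weak lower semicontinuity (the gradient term is w.l.s.c., the $F_{i,\E}$ term is weakly continuous by compact Sobolev embedding and the uniform upper bound, and $\int H\ge 0$ is w.l.s.c.\ by Fatou along a subsequence converging a.e.) yields a minimizer $U=(u_1,\dots,u_k)$. Then (ii) follows by a truncation argument: replacing $u_i$ by $\min\{u_i^+,\beta_i\}$ does not increase any of the three terms of $I^{\lambda,\vk}_{\E}$ — the Dirichlet energy decreases, $-\lambda F_{i,\E}$ does not increase because $f_{i,\E}\le 0$ beyond $\beta_i$ and $f_{i,\E}\ge 0$ on $(0,\beta_i)$ forces $F_{i,\E}$ to be maximized in $[0,\beta_i]$ precisely at the truncation, and $H$ is monotone in each variable on the relevant range via (H2) — so the minimizer may be taken with $0\le u_i\le\beta_i$; standard elliptic regularity then makes $U$ a genuine solution of \eqref{eq:sistema_con_H}, giving (iii).

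The heart of the matter is (i), i.e.\ that no component vanishes. I would argue by a competitor/energy-comparison estimate quantifying how much energy each species can contribute. Fix a small ball $B\subset\Omega$ and, for species $1$, use a fixed test function $\varphi\in H^1_0(B)$ scaled to take values in $[0,\beta]$; its contribution to the first component of $I^{\lambda,\vk}_{\E}$ is $\tfrac12\|\nabla\varphi\|_{L^2}^2-\lambda\int_B F_{1,\E}(\varphi)$. By (F3), choosing $\varphi$ appropriately and $\lambda$ large, $\lambda\int_B F_{1}(\varphi)$ beats the Dirichlet cost, so turning species $1$ on strictly lowers the energy — this fixes $\lambda_0$. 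For species $i\ge 2$, the scaling in \eqref{eq:lefi} means the natural test function has amplitude of order $\eps_i$, so its gradient energy is of order $\eps_i^2$ while its potential gain $\lambda\int F_{i,\E}$ is of order $\lambda\eps_i^{2}\cdot(\alpha/k)/\beta_i\sim \lambda\eps_i\cdot(\text{const})$ — more carefully, using \eqref{e:ipotesiI} the available potential energy of the $i$-th species scales like $\lambda\,\eps_i^{N}$ for an $\eps_i$-amplitude, $O(1)$-support bump while the kinetic cost also scales like $\eps_i^N$ times an $O(1)$ constant, and the competition penalty $\vk\int H$ involves products with the other components and scales like $\vk\eps_i^{2}\cdot\eps_j^{2}$ or smaller. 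The point is to place the supports of the $k$ test bumps disjointly inside $\Omega$ (possible since $\Omega$ is open) so that $H$ evaluated on the sum vanishes by (H3), making the competition term contribute nothing to the comparison configuration; then the model energy of the $k$-bump competitor is strictly below that of any configuration missing the $i$-th species, as soon as $\eps_i$ is small enough. This is where $\eps_{\lambda,\vk}$ is defined, and tracking the dependence shows it degrades as $\lambda/\vk\to 0$: if $\vk$ dominates, even a tiny overlap forced by having to fit all bumps could cost more than the $i$-th species gains, so one needs the supports genuinely separated and $\eps_i$ small relative to $\lambda/\vk$.

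More precisely, the comparison I would run is: let $U^\ast$ be a global minimizer; suppose for contradiction $u^\ast_m\equiv 0$ for some $m$. Build $\tilde U$ by keeping the other components of $U^\ast$ and adding a small bump $\psi_m$ supported in a ball $B_m\subset\Omega$ chosen (by absolute continuity of the Lebesgue measure and $u^\ast_i\in H^1_0$) so that $\|u^\ast_i\|_{H^1(B_m)}$ is as small as we like for every $i\ne m$ — then on $B_m$ the interaction $H(\tilde U)$ is controlled by a quantity going to $0$ with $|B_m|$, while $\tfrac12\|\nabla\psi_m\|^2-\lambda\int F_{m,\E}(\psi_m)<0$ by the scaling analysis above for $\eps_m$ small. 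Subtracting, $I^{\lambda,\vk}_{\E}(\tilde U)<I^{\lambda,\vk}_{\E}(U^\ast)$, contradicting minimality. Carrying out this estimate so that the $\vk$-penalty genuinely stays below the potential gain is the main obstacle: one must control $\vk\int_{B_m} H(\tilde U)$ by $\vk$ times a power of $|B_m|$ (using $H\in C^1$, $H=0$ when $k-1$ arguments vanish, and the smallness of the other components on $B_m$) and then choose $\eps_m$ — hence the radius of $B_m$ and the amplitude of $\psi_m$ — small enough depending on $\lambda$ and $\vk$; the requirement that the available potential energy $\sim\lambda\,(\text{const})$ dominates $\vk\,(\text{small})$ plus the kinetic cost is exactly what produces the threshold $\eps_{\lambda,\vk}$ and its dependence on $\lambda/\vk$. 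Once (i)--(iii) are in hand the theorem follows, the final sentence about $\eps_{\lambda,\vk}\to 0$ as $\lambda/\vk\to0$ being read off from the explicit inequality chain.
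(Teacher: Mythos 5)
The existence, truncation, and minimizer-solves-the-system parts of your plan are sound and essentially match the paper's Step 1 (plus its Lemma \ref{l:upper1}). The genuine gap is in the coexistence step (i), specifically in how you control the competition penalty. You propose to bound $\varkappa\int_{B_m}H(\tilde U)$ by choosing the ball $B_m$ so that the \emph{other} components are small there, and you invoke ``$H=0$ when $k-1$ arguments vanish''. First, that property is part of (H3), which is not among the hypotheses of this theorem (only (H1)--(H2) are assumed). Second, and more seriously, the smallness you can actually arrange, $\|u^*_i\|_{H^1(B_m)}$ small by shrinking $|B_m|$, is only proportional smallness: pointwise, $H(\psi_m,u^*_{-m})\lesssim \sum_{i\neq m}u^*_i$ on $B_m$, so the penalty is of size $\varkappa\,|B_m|$ times the average of the other components on $B_m$, while the gain from the bump is at most $\lambda(\alpha/k)|B_m|$. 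Both sides scale like $|B_m|$, and nothing lets you pick a ball where the first component has small average (on the contrary, by minimality $J_1^\lambda(u_1)<-\alpha\lambda|\Omega|/(2k)$, so $u_1$ is large on a substantial part of $\Omega$). Hence your comparison fails precisely in the regime $\varkappa\gg\lambda$ that the theorem is designed to cover. The correct mechanism, which is the paper's, is to estimate the \emph{difference} $H(W)-H(U^*)$ rather than $H(W)$ itself: since the new component is uniformly small, $0\le w_i\le \beta\varepsilon_i/\sqrt{k}$, monotonicity (H2) and uniform continuity of $H$ on $[0,\beta]^k$ give $0\le H(W)-H(U^*)\le \omega(\beta\varepsilon_i/\sqrt{k})$ pointwise, with no smallness required of the other components and no use of (H3); the penalty $\varkappa\,\varepsilon_i^N|\Omega|\,\omega(\beta\varepsilon_i/\sqrt{k})$ is then beaten by the gain as soon as \eqref{eq:epsi} holds, which is exactly where $\varepsilon_{\lambda,\varkappa}$ and its dependence on $\lambda/\varkappa$ come from.

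Your treatment of the first species has the same defect in aggravated form: a bump for species $1$ must have amplitude of order $\beta$, so its interaction cost is not small in any parameter, and ``turning species $1$ on'' cannot be shown to lower the energy uniformly in $\varkappa$; at best you would obtain a $\lambda_0$ depending on $\varkappa$, contrary to the statement. The paper proves $u_1\not\equiv 0$ by a different route: comparing $\Lambda$ with $m_1^\lambda$ via the single-component competitor $(u,0,\dots,0)$ and using the asymptotics of Lemma \ref{l:limiti} to fix $\lambda_0$ as in \eqref{eq:lambda}, which yields the quantitative bound \eqref{eq:gamma}, $J_1^\lambda(u_1)<-\alpha\lambda|\Omega|/(2k)$, for \emph{every} minimizer. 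This bound is then recycled in the missing-component step by inserting the exact rescaling $w_i(x)=\frac{\varepsilon_i}{\sqrt{k}}\,u_1(\varepsilon_i^{-1}(x-x_0))$, for which the scaling built into \eqref{eq:lefi} gives the identity $J_{i,\E}^\lambda(w_i)=\frac{\varepsilon_i^N}{k}J_1^\lambda(u_1)<-\varepsilon_i^N\frac{\alpha\lambda|\Omega|}{2k^2}$, matching the $\varepsilon_i^N$ scaling of the penalty. Your fixed-support, small-amplitude bump could replace this for $i\ge2$ once the interaction difference is estimated as above, but you still need an argument for the first component that does not consist of adding a large-amplitude bump.
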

Some remarks are in order.
\begin{remark}\label{rem:mod_cont}$\;$\\
  a) It will be clear from the proof how $\eps_{\lambda,\vk}$ depends
  on the data of the problem, see \eqref{eq:epsi}. For instance, for
  the Lotka-Volterra model in a ball, $H$ as in \eqref{eq:HH} and
  $g(u)=u-u^2$, we can choose
$$\eps_{\lambda,\vk}^2=\frac{1}{6k^2}\frac{\lambda}{\vk}.$$
b) If the interspecific competition rate $\vk$ grows (at $\lambda$
fixed), then every $\eps_i$ becomes smaller and smaller. Hence by (ii)
we learn that $k-1$ components annihilate uniformly in $\Omega$,
implying that in the  limit configuration as $\vk\to\infty$ only the first component is alive.\\
\end{remark}

Concerning the optimal partition problem stated in the introduction,
we shall focus on a special class of domains.\\

\medskip\noindent {\bf Description of the domain.}  Let $\Omega\subset
\R^N$, $N\geq 2$, be a bounded Lipschitz domain with ${\bf
  0}\in\partial\Omega$ such that
\begin{itemize}
\item[(D1)] $\Omega\subset T$, where $T=\Big\{x=(x_1,x_2,\dots,x_N)\in\R^N\,:\, m\sqrt{\sum_{i=2}^N x_i^2}\leq
  x_1\leq 1\Big\}$ and $m>1$;
\item[(D2)] there exists
$\delta_0\in (0,1)$ such that
$$
\delta\Omega=\{x\in\R^N:\delta^{-1}x\in\Omega\}\subset\Omega,
$$
for every $\delta\in(0,\delta_0)$.
\end{itemize}
\begin{figure}[h]
 \centering
   \begin{psfrags}
     \psfrag{O}{$\Omega$}
     \psfrag{T}{$T$}
     \includegraphics[width=6cm]{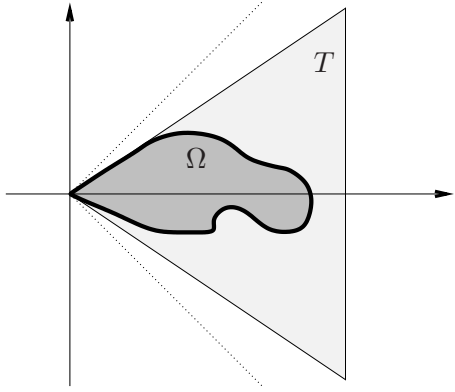}
   \end{psfrags}
 \caption{An example of domain $\Omega\subset\R^2$.}\label{fig:dd}
\end{figure}

\begin{theorem}\label{th:main-partition}
  Let $g\in C^0(\R)$ satisfy (F1--3) and $\Omega$ be a bounded
  Lipschitz domain satisfying (D1--2).  There exist $\lambda_0>0$ and
  $\eps_0>0$ such that, if $\E\in(0,\eps_0)^{k-1}$ and
  $\lambda>\lambda_0$, then every global minimizer of
\begin{equation*}
  {\mathcal E}^{\lambda}_{\E}(u_1,\dots,u_k)=\sum_{i=1}^k\int_\Omega\bigg(\frac12|\nabla u_i(x)|^2-
  \lambda F_{i,\E}(u_i(x))\bigg)dx
\end{equation*}
on ${\U}\cap H^1_0(\Omega)$ has at least  two nonnegative and nonzero  components.
\end{theorem}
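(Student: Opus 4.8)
The plan is to argue by contradiction: suppose a global minimizer $U=(u_1,\dots,u_k)$ of $\mathcal E^\lambda_\E$ on $\U\cap H^1_0(\Omega)$ has only one nontrivial component, say $u_1\not\equiv 0$ and $u_2=\cdots=u_k\equiv 0$. Then $\mathcal E^\lambda_\E(U)=\int_\Omega\big(\frac12|\nabla u_1|^2-\lambda F_{1,\E}(u_1)\big)dx$, and since the other components carry no competition constraint among themselves in $\mathcal E^\lambda_\E$ (the functional is just a sum), $u_1$ must itself minimize the single-equation energy $v\mapsto\int_\Omega(\frac12|\nabla v|^2-\lambda F(v))$ over $H^1_0(\Omega)$, where $F=F_1$ corresponds to $g$. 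The first step is therefore to recall the standard facts about this scalar problem: for $\lambda$ large the minimal energy $c_1(\lambda):=\min_{v\in H^1_0(\Omega)}\int_\Omega(\frac12|\nabla v|^2-\lambda F(v))$ is strictly negative, the minimizer $w_\lambda$ is nontrivial, nonnegative, bounded by $\beta$ (by (F2), truncation decreases energy), and by (F3) together with a rescaling argument one gets the asymptotics $c_1(\lambda)\le -\lambda\alpha|\Omega|+o(\lambda)$, or at least $c_1(\lambda)/\lambda\to-\alpha|\Omega|<0$. This fixes $\lambda_0$ so that $c_1(\lambda)<0$ for $\lambda>\lambda_0$.

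The heart of the argument is then to construct a competitor in $\U\cap H^1_0(\Omega)$ with strictly lower energy, exhibiting at least two live components. Here is where the geometry of $\Omega$ enters, following the Sweers-type idea cited before the theorem. Using (D2), for any $\delta\in(0,\delta_0)$ the shrunken copy $\delta\Omega$ sits inside $\Omega$; more to the point, the cone/angle structure (D1), $\Omega\subset T$ with $m>1$, lets us place \emph{two} disjoint scaled-and-shifted copies of $\Omega$ inside $\Omega$. Concretely, choose small $\delta$ and translate two copies $\delta\Omega+a$, $\delta\Omega+b$ along the $x_1$-axis so that they are contained in $\Omega$ and disjoint — this is possible precisely because near ${\bf 0}$ the domain opens like a cone of aperture controlled by $m$, so two dilated copies fit side by side without overlapping. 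On the first copy put a rescaled minimizer of the $f_{1,\E}$-energy, and on the second copy put a rescaled minimizer of the $f_{2,\E}$-energy; set all remaining components to zero. Since the supports are disjoint, the resulting $V\in\U\cap H^1_0(\Omega)$. It remains to show $\mathcal E^\lambda_\E(V)<c_1(\lambda)$.

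The key computation is to track how the scalar minimal energy behaves under the two rescalings at play. First, domain scaling: if $w$ minimizes $\int_{\Omega}(\frac12|\nabla w|^2-\lambda F(w))$ then $x\mapsto w(x/\delta)$ on $\delta\Omega$ has energy $\delta^{N-2}\cdot\frac12\|\nabla w\|^2-\delta^N\lambda\int_\Omega F(w)$; for $N\ge 2$ and $\delta$ small this is of order $\delta^{N-2}$ from the gradient but the \emph{gain} is order $\delta^N\lambda$, so one should instead scale $\lambda$ up: the minimal energy on $\delta\Omega$ at parameter $\lambda$ equals $\delta^{N-2}$ times the minimal energy on $\Omega$ at parameter $\lambda\delta^2$, and by the asymptotics this is $\approx -\delta^{N-2}\cdot\lambda\delta^2\alpha|\Omega|=-\delta^N\lambda\alpha|\Omega|$, which is a small negative number — \emph{not} obviously beating $c_1(\lambda)$. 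So the more careful point is the \emph{amplitude} rescaling built into $f_{i,\E}$: by \eqref{e:ipotesiI} each $f_{i,\E}$ still satisfies (F2)–(F3) with the \emph{same} integral $\alpha/k$ up to the factor $1/k$, and the substitution $s=\frac{\sqrt k}{\eps_i}t$ shows that the $f_{i,\E}$-energy of a function is $\frac{\eps_i^2}{k}$ times the $g$-energy of its rescaling (gradient term scales like $\eps_i^2/k$, potential term like $\eps_i^2/k$ as well since $F_{i,\E}(t)=\frac{\eps_i^2}{k}F(\frac{\sqrt k}{\eps_i}t)$). Thus one wants: (scalar $g$-energy on copy $1$) $+\frac{\eps_2^2}{k}\cdot$(scalar $g$-energy on copy $2$, rescaled) $<c_1(\lambda)$. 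Since putting the true minimizer $w_\lambda$'s dilate on copy $1$ already essentially recovers $c_1(\lambda)$ up to a controlled error from the dilation, and the copy-$2$ term is strictly negative (for $\lambda$ large enough, independent of small $\eps_2$, because the $g$-energy on the fixed small domain at parameter $\lambda$ is negative), the sum is strictly below $c_1(\lambda)$ provided $\lambda$ is large and the $\eps_i$ are small enough that the dilation error on copy $1$ is dominated. Fixing $\lambda_0$ and $\eps_0$ accordingly closes the argument.

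The main obstacle I anticipate is the bookkeeping in this last step: one must choose the two dilated copies so that the energy of the \emph{first} copy alone is already $\ge c_1(\lambda)-\eta$ for a small $\eta$ that can be made smaller than the strictly negative contribution of the second copy, and all of this uniformly as $\E\to 0$. Making the first copy's energy close to $c_1(\lambda)$ is delicate because a genuine minimizer on $\Omega$ need not fit inside a proper subdomain; the fix is to use (D2) to note $\Omega$ itself contains copies of $\Omega$ arbitrarily close to full size, i.e. take $\delta$ close to $\delta_0$ rather than close to $0$ for copy $1$, reserving a thin remaining region near the vertex for a small copy $2$ — the cone condition (D1) guaranteeing that a (possibly tiny) second copy genuinely fits in the complement. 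Quantifying ``thin remaining region'' versus ``$\eta$'' is the one genuinely technical estimate; everything else is the scaling algebra recorded above.
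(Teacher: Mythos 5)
Your contradiction scheme (assume only $u_1\not\equiv 0$, note that $u_1$ then minimizes the scalar energy $J_1^\lambda$, and build a two-component competitor in $\U\cap H^1_0(\Omega)$ with strictly lower energy) is the paper's, but your competitor does not work, and the failure is exactly at the point you flag as ``the one genuinely technical estimate.'' You confine the first species to a dilated copy of $\Omega$ strictly inside $\Omega$. By the scaling you yourself record, the minimal $g$-energy on a $\delta$-dilate at parameter $\lambda$ is of order $-\delta^N\lambda\alpha|\Omega|$ while $m_1^\lambda\approx-\lambda\alpha|\Omega|$, so the loss incurred on copy~1 is of order $(1-\delta^N)\lambda\alpha|\Omega|$, i.e.\ of order $\lambda$. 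Assumption (D2) only provides dilates with $\delta<\delta_0$, where $\delta_0\in(0,1)$ is a fixed constant that need not be close to $1$, so your fix (``copies arbitrarily close to full size, take $\delta$ close to $\delta_0$'') is not available; moreover translated dilates $\delta\Omega+a$ need not be contained in $\Omega$ (neither (D1) nor (D2) gives this), and the dilate about the origin still contains the vertex, so it leaves no free neighbourhood of ${\bf 0}$ for the second copy. Against this order-$\lambda$ loss, the gain produced by the tiny amplitude-and-domain rescaled second component is only of order $\eps_2^N\lambda/k$ (and $\eps_2<\eps_0$ is small in the statement), so the sum can never drop below $m_1^\lambda$. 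Your closing remark that the copy-2 term is negative ``for $\lambda$ large independent of small $\eps_2$'' also hides that the domain scale and the amplitude scale must be tied (both equal to $\eps_2$, as in \eqref{eq:lefi}) for the gradient and potential terms to scale with the same power $\eps_2^N/k$.

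The idea you are missing, which is the actual content of the paper's proof, is that one should \emph{not} move $u_1$ onto a smaller copy at all: keep the true minimizer $u_1$ and only multiply it by a cut-off $\phi(\eps_2^{-1}x_1)$ vanishing on $\{x_1<\eps_2\}$, then insert $v_2(x)=\frac{\eps_2}{\sqrt k}\,u_1(\eps_2^{-1}x)$, which by (D2) and (D1) is supported in $\eps_2\Omega\subset\{x_1\le\eps_2\}$, so the supports are disjoint and $V\in\U$. The cone condition (D1) is not used to fit two copies side by side; its role is to provide, via the barrier of Lemma \ref{l:upper}, the quadratic decay $u_1(x)\le\gamma x_1^2$ near the vertex, and this is what makes the cut-off cheap: $J_1^\lambda(u_{1,\eps_2})-J_1^\lambda(u_1)\le C\eps_2^{N+2}$, instead of the $O(\eps_2^{N-2})$ a naive gradient estimate would give. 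Since the tied rescaling yields $J_{2,\E}^\lambda(v_2)=\frac{\eps_2^N}{k}J_1^\lambda(u_1)<-\eps_2^N\frac{\alpha\lambda|\Omega|}{2k^2}$ for $\lambda>\lambda_0$ as in \eqref{eq:lambda}, the total change is at most $\eps_2^N\big(-\frac{\alpha\lambda|\Omega|}{2k^2}+C\eps_2^2\big)<0$ for $\eps_2$ small, giving the contradiction. Without some decay estimate for the minimizer at the vertex (your proposal never invokes Lemma \ref{l:upper} or any substitute), there is no way to make the first component's energy loss smaller than the $O(\eps_2^N)$ gain, so the argument as proposed cannot be closed.
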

In the case of two species we have a better understanding of the
phenomenon. In particular, we establish the link between limit configurations
of system (\ref{eq:sistema_con_H}) as $\vk\to\infty$ and solutions to the
optimal partition problem.
\begin{theorem}\label{th:system2}
  Let $k=2$. Let $g\in C^0(\R)$ satisfy (F1--3), $H\in C^1(\R^2)$
  satisfy (H1--3) and $\Omega$ be a bounded Lipschitz domain
  satisfying (D1--2).  There exist $\lambda_0>0$ and $\eps_0>0$ such
  that, if $\E\in(0,\eps_0)^{k-1}$ and $\lambda>\lambda_0$, then the
  global minimum of $\Ik$ is achieved for all $\vk>0$ and every
  global minimizer is a nontrivial configuration $(u_1^\vk,u_2^\vk)$
  with both $u_i^\vk\geq 0$, $u_i^\vk\not\equiv 0$, $i=1,2$. Moreover,
  for every fixed $\E\in(0,\eps_0)^{k-1}$ and $\lambda>\lambda_0$,
there exists $U=(u_1,u_2)\in [H^1_0(\Omega)]^2$ such that
  \begin{enumerate}
  \item $u_i\not\equiv 0$ for $i=1,2$,
  \item $U=(u_1,u_2)\in  \mathcal{U}$,
  \item $U$ is a global minimizer of ${\mathcal E}_{\E}^{\lambda}$ on
    ${\U}\cap H^1_0(\Omega)$,
  \end{enumerate}
  and, up to subsequences, $u_i^{\varkappa}$ converges strongly to $u_i$ in
  $H^1(\Omega)$.
\end{theorem}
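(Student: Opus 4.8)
The plan is to handle the competition rate $\vk>0$ first, establishing existence and coexistence of global minimizers of $\Ik$ for each fixed $\vk$, and only afterwards to pass to the limit $\vk\to\infty$; throughout, $\lambda_0$ and $\eps_0$ will be exactly the thresholds furnished by Theorem~\ref{th:main-partition}, and $\E\in(0,\eps_0)^{k-1}$, $\lambda>\lambda_0$ are fixed. For fixed $\vk$, a global minimizer $U^\vk=(u_1^\vk,u_2^\vk)$ of $\Ik$ on $[H^1_0(\Omega)]^2$ is produced by the direct method: since $F_{i,\E}(s)\le\alpha$ for all $s\in\R$ (by (F2)--(F3) and \eqref{e:ipotesiI}) and $H\ge0$ by (H1), $\Ik$ is coercive (bounded below by $\sum_i\tfrac12\|\nabla v_i\|_{L^2(\Omega)}^2-C(\lambda,\Omega)$) and sequentially weakly lower semicontinuous (the Dirichlet terms by convexity, the lower-order terms along a bounded minimizing sequence by Rellich compactness and Fatou's lemma, using $-F_{i,\E}\ge-\alpha$ and $H\ge0$). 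As in the proof of Theorem~\ref{th:main-sistema}, replacing each component by its positive part does not increase $\Ik$ — the Dirichlet terms and the potentials $\int_\Omega F_{i,\E}(u_i)$ are unchanged because $F_{i,\E}\equiv0$ on $(-\infty,0]$, and $\int_\Omega H$ does not increase pointwise, by (H1) and (H3) — and truncating the $i$-th component at the level $\beta_i$ does not increase $\Ik$ either, because $-F_{i,\E}$ is nondecreasing beyond $\beta_i$ by (F2) and $H$ is nondecreasing in each nonnegative variable by (H2); so we may take $0\le u_i^\vk\le\beta_i$. Comparing $U^\vk$ with any $W\in\U\cap H^1_0(\Omega)$, on which $\Ik$ coincides with ${\mathcal E}^\lambda_\E$ (since $H$ vanishes there, by (H3)), gives $\Ik(U^\vk)\le m_0:=\inf_{\U\cap H^1_0(\Omega)}{\mathcal E}^\lambda_\E\in\R$, and with $0\le u_i^\vk\le\beta_i$ this bounds $\|\nabla u_i^\vk\|_{L^2(\Omega)}$ uniformly in $\vk$.

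Coexistence for a fixed $\vk$ I would prove by contradiction. If $u_2^\vk\equiv0$, then $U^\vk=(u_1^\vk,0)$ and, by (H3), $\Ik(v,0)=\int_\Omega(\tfrac12|\nabla v|^2-\lambda F_{1,\E}(v))\,dx=:{\mathcal E}_1(v)$ for all $v\in H^1_0(\Omega)$, so $u_1^\vk$ minimizes ${\mathcal E}_1$ over $H^1_0(\Omega)$ and $\min\Ik={\mathcal E}_1(u_1^\vk)=\min_{H^1_0(\Omega)}{\mathcal E}_1$; symmetrically if $u_1^\vk\equiv0$. Since $\min\Ik\le m_0$ by the previous step, it therefore suffices to establish the strict inequality
\[
  m_0=\inf_{\U\cap H^1_0(\Omega)}{\mathcal E}^\lambda_\E\ <\ \min\big\{\min_{H^1_0(\Omega)}{\mathcal E}_1,\ \min_{H^1_0(\Omega)}{\mathcal E}_2\big\},
\]
with ${\mathcal E}_i(v)=\int_\Omega(\tfrac12|\nabla v|^2-\lambda F_{i,\E}(v))\,dx$. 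For $k=2$ this is precisely what the proof of Theorem~\ref{th:main-partition} delivers: a partition-admissible competitor, obtained by placing a ground-state bump of the $i$-th equation inside a small self-similar copy $\delta\Omega\subset\Omega$ squeezed into the angle at ${\bf 0}$, with supports disjoint from the remaining component so that $H$ contributes nothing, has ${\mathcal E}^\lambda_\E$-energy strictly below $\min_i\min{\mathcal E}_i$ as soon as $\lambda>\lambda_0$ and $\E\in(0,\eps_0)^{k-1}$ — this is where hypotheses (D1)--(D2) enter. Crucially the displayed inequality does not involve $\vk$, so $u_1^\vk\not\equiv0$ and $u_2^\vk\not\equiv0$ for every $\vk>0$, with one and the same pair $\lambda_0,\eps_0$.

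Finally I would let $\vk\to\infty$. From $\Ik(U^\vk)\le m_0$ and $\int_\Omega F_{i,\E}(u_i^\vk)\le\alpha|\Omega|$ one reads off $\vk\int_\Omega H(u_1^\vk,u_2^\vk)\,dx\le C$ uniformly in $\vk$, hence $\int_\Omega H(u_1^\vk,u_2^\vk)\,dx\to0$. Using the uniform $H^1_0(\Omega)$ bound, along a subsequence $u_i^\vk\rightharpoonup u_i$ weakly in $H^1_0(\Omega)$, strongly in $L^2(\Omega)$ and a.e., with $0\le u_i\le\beta_i$; Fatou's lemma then forces $\int_\Omega H(u_1,u_2)\,dx=0$, so $H(u_1,u_2)=0$ a.e., and (H3) yields $u_1u_2=0$ a.e., i.e.\ $U=(u_1,u_2)\in\U\cap H^1_0(\Omega)$ — which is (2). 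Since $F_{i,\E}$ is continuous and bounded on $[0,\beta_i]$, dominated convergence gives $\int_\Omega F_{i,\E}(u_i^\vk)\to\int_\Omega F_{i,\E}(u_i)$; combined with weak lower semicontinuity of the Dirichlet integrals and with ${\mathcal E}^\lambda_\E(U^\vk)\le\Ik(U^\vk)$ (by (H1)) this gives ${\mathcal E}^\lambda_\E(U)\le\liminf_\vk{\mathcal E}^\lambda_\E(U^\vk)\le\liminf_\vk\Ik(U^\vk)\le m_0$, and since $U\in\U$ this must be an equality: $U$ is a global minimizer of ${\mathcal E}^\lambda_\E$ on $\U\cap H^1_0(\Omega)$ — which is (3) — so Theorem~\ref{th:main-partition} applies to $U$ and gives $u_i\not\equiv0$, $i=1,2$ — which is (1). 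As all inequalities in the chain are now equalities, ${\mathcal E}^\lambda_\E(U^\vk)\to{\mathcal E}^\lambda_\E(U)$; subtracting the convergent potentials we get $\sum_i\|\nabla u_i^\vk\|_{L^2(\Omega)}^2\to\sum_i\|\nabla u_i\|_{L^2(\Omega)}^2$, which together with $\liminf_\vk\|\nabla u_i^\vk\|_{L^2(\Omega)}^2\ge\|\nabla u_i\|_{L^2(\Omega)}^2$ for each $i$ forces $\|\nabla u_i^\vk\|_{L^2(\Omega)}\to\|\nabla u_i\|_{L^2(\Omega)}$; weak convergence plus convergence of the norms in the Hilbert space $H^1_0(\Omega)$ then gives $u_i^\vk\to u_i$ strongly in $H^1(\Omega)$.

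The only genuinely delicate ingredient is the $\vk$-independent strict energy inequality of the second paragraph: this is exactly where the angular structure (D1) and the self-similarity (D2) of $\Omega$, together with the joint thresholds $\eps_0$ and $\lambda_0$, are really at work, and I would quote it from the proof of Theorem~\ref{th:main-partition} rather than reprove it. Everything else — the direct method, the two truncations, and the compactness/$\liminf$ bookkeeping as $\vk\to\infty$ — is soft.
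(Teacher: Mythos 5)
Your proposal is correct and its skeleton is the paper's: existence by the direct method with the two truncations, a $\vk$-independent coexistence step powered by Theorem \ref{th:main-partition}, and then the standard compactness bookkeeping as $\vk\to\infty$. Two steps are executed differently. For coexistence, the paper (after noting via \eqref{eq:gamma} that the first component of any minimizer is nontrivial) re-runs the cut-off/rescaling construction of Theorem \ref{th:main-partition} directly on $u_1^\vk$, which in the contradiction scenario is a global minimizer of $J^\lambda_1$, so Lemma \ref{l:upper} applies and the resulting smallness threshold on $\eps_2$ is manifestly independent of $\vk$; you instead package the same information as the level inequality $\inf_{\U\cap H^1_0(\Omega)}{\mathcal E}^\lambda_{\E}<\min\{\min{\mathcal E}_1,\min{\mathcal E}_2\}$, deduced from the statement of Theorem \ref{th:main-partition} together with attainment of $\min{\mathcal E}_i$ (Section 3) and, for the comparison with $\min{\mathcal E}_2$, the estimates of Lemma \ref{l:limiti} and the choice \eqref{eq:lambda} — equivalent in substance, and it makes the $\vk$-independence explicit. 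For the segregation of the limit, the paper tests the Euler--Lagrange equation of $u_i^\vk$ with $u_i^\vk$ to get $\int_\Omega u_i^\vk\,\partial H/\partial s_i(u_1^\vk,u_2^\vk)\to0$ and then invokes (H2)--(H3); you avoid the PDE, reading $\vk\int_\Omega H(u_1^\vk,u_2^\vk)\le C$ off the energy bound and passing to the limit by Fatou, which also works. Two small points to make explicit: (H3) by itself does not say that $H(s_1,s_2)=0$ forces $s_1s_2=0$ — you need the one-line remark that, since $H\ge0$ is $C^1$ and vanishes at $(u_1(x),u_2(x))$, that point is a global minimum, hence $\partial H/\partial s_i=0$ there, and only then does (H3) give $u_1u_2=0$; and the claim that \emph{every} global minimizer satisfies $0\le u_i^\vk\le\beta_i$ (used for dominated convergence of the potentials) should be justified as in the paper via Lemma \ref{l:upper1}, minimizers being weak solutions, since your truncation only produces \emph{some} minimizer with these bounds. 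Neither point affects the validity of your argument; the remaining steps (minimality of the limit and strong $H^1$ convergence from norm convergence) coincide with the paper's.
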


\section{Preliminary results}
Let $\Omega$ be a bounded open set in $\R^N$ and $g\in C^0(\R)$
satisfy (F1--3). For every $\lambda>0$, $\E\in(0,1)^{k-1}$, and
$i\geq2$, let us define
$J_{1}^{\lambda},J_{i,\E}^{\lambda}:H^1_0(\Omega)\to
(-\infty,+\infty]$
\begin{align*}
  &J_{1}^{\lambda}(u)=\int_\Omega\bigg(\frac12|\nabla
  u(x)|^2-\lambda\int_0^{u(x)}g(s)\,ds\bigg)dx,\\
&J_{i,\E}^{\lambda}(u)=\int_\Omega\bigg(\frac12|\nabla
  u(x)|^2-\lambda F_{i,\E}(u(x))\bigg)dx.
\end{align*}
If $\lambda >\lambda_1(\Omega)$, with $\lambda_1(\Omega)$ being the
first eigenvalue of the Laplace operator with null Dirichlet boundary
conditions, it is easy to prove that the infima
$$
m_{1}^{\lambda}:=\inf\left\{ J_{1}^{\lambda}(u),\;u\in
  H^1_0(\Omega)\right\}, \quad m_{i,\E}^{\lambda}:=\inf\left\{
  J_{i,\E}^{\lambda}(u),\;u\in H^1_0(\Omega)\right\}
$$
are achieved and any minimizer is a positive weak solution to the
elliptic equation
$$
\begin{cases}
-\Delta u= \lambda f_{i,\E}(u), &\text{in }\Omega,\\
u\in H^1_0(\Omega),
\end{cases}
$$
see e.g. \cite{am}.
\begin{lemma}\label{l:limiti}
 There hold
\begin{align*}
  & m_{1}^{\lambda}\geq -\alpha\lambda|\Omega|,\quad
\lim_{\lambda\to+\infty} \lambda^{-1}m_{1}^{\lambda}=-\alpha |\Omega|,\\
  & m_{i,\E}^{\lambda}\geq -\lambda\frac{\alpha}k|\Omega|,\quad
\lim_{\lambda\to+\infty}
  \lambda^{-1}m_{i,\E}^{\lambda}=-\frac{\alpha}{k}|\Omega|,\qquad
  i=2,\dots,k,
\end{align*}
where $|\cdot|$ denotes the Lebesgue measure in  $\R^N$.
\end{lemma}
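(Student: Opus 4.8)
The plan is to prove both inequalities by elementary one--dimensional estimates on the primitive $G(t):=\int_0^tg(s)\,\d s$. From (F1)--(F3) one reads off that $G$ is nondecreasing on $[0,\beta]$, nonincreasing on $[\beta,+\infty)$ and identically zero on $(-\infty,0]$; in particular
\[
G(t)\le\alpha\ \text{ for every }t\in\R,\qquad 0\le G(t)\le\alpha\ \text{ for }t\in[0,\beta],\qquad G(\beta)=\alpha .
\]
Performing the change of variable $\sigma=\tfrac{\sqrt k}{\eps_i}s$ in the definition of $F_{i,\E}$ one gets $F_{i,\E}(t)=\tfrac1k\,G\!\big(\tfrac{\sqrt k}{\eps_i}t\big)$, so the same monotonicity and sign properties hold for $F_{i,\E}$ with $\alpha$ and $\beta$ replaced by $\alpha/k$ and $\beta_i$, respectively.

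The lower bounds are then immediate: discarding the nonnegative Dirichlet term and using $G(u)\le\alpha$ pointwise,
\[
J_1^{\lambda}(u)\ge-\lambda\!\int_\Omega G(u(x))\,\d x\ge-\alpha\lambda|\Omega|\qquad\text{for all }u\in H^1_0(\Omega),
\]
hence $m_1^{\lambda}\ge-\alpha\lambda|\Omega|$; the analogous argument with $F_{i,\E}\le\alpha/k$ gives $m_{i,\E}^{\lambda}\ge-\tfrac{\alpha}{k}\lambda|\Omega|$. Dividing by $\lambda>0$ already yields $\liminf_{\lambda\to+\infty}\lambda^{-1}m_1^{\lambda}\ge-\alpha|\Omega|$ and likewise for $m_{i,\E}^{\lambda}$.

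For the matching upper bounds I would build near--optimal competitors. Fix $\delta>0$. Since $\Omega$ is open and bounded, the open sets $\Omega_\eta:=\{x\in\Omega:\dist(x,\partial\Omega)>\eta\}$ increase to $\Omega$ as $\eta\downarrow0$, so $|\Omega_\eta|\ge|\Omega|-\delta$ for some small $\eta>0$; fix such an $\eta$ and a cut--off $\varphi\in C_c^\infty(\Omega)$ with $0\le\varphi\le1$ and $\varphi\equiv1$ on $\Omega_\eta$. Taking $u_\eta:=\beta\varphi\in H^1_0(\Omega)$ we have $0\le u_\eta\le\beta$, $u_\eta\equiv\beta$ on $\Omega_\eta$, hence $G(u_\eta)\ge0$ everywhere and $G(u_\eta)=\alpha$ on $\Omega_\eta$. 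Therefore
\[
m_1^{\lambda}\le J_1^{\lambda}(u_\eta)=\tfrac12\!\int_\Omega|\nabla u_\eta|^2\,\d x-\lambda\!\int_\Omega G(u_\eta)\,\d x\le\tfrac12\|\nabla u_\eta\|_{L^2(\Omega)}^2-\alpha\lambda\,(|\Omega|-\delta),
\]
and dividing by $\lambda$ and letting $\lambda\to+\infty$ gives $\limsup_{\lambda\to+\infty}\lambda^{-1}m_1^{\lambda}\le-\alpha(|\Omega|-\delta)$. As $\delta>0$ is arbitrary, this combines with the lower bound to give $\lim_{\lambda\to+\infty}\lambda^{-1}m_1^{\lambda}=-\alpha|\Omega|$. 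The statement for $m_{i,\E}^{\lambda}$ is obtained verbatim, using the competitor $\beta_i\varphi$ and the bound $\int_\Omega F_{i,\E}(\beta_i\varphi)\ge\tfrac{\alpha}{k}|\Omega_\eta|$.

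I do not expect any real obstacle; the only steps needing a little care are extracting from (F1)--(F3) that $\alpha=\max_{\R}G$ is attained at $\beta$ (and $G\ge0$ on $[0,\beta]$), the change of variables identifying $F_{i,\E}$ with a rescaling of $G$, and the elementary measure--theoretic fact $|\Omega_\eta|\to|\Omega|$ that makes the near--optimal test functions admissible in $H^1_0(\Omega)$.
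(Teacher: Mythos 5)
Your proof is correct and follows essentially the same route as the paper: the lower bounds come from discarding the gradient term and bounding the primitives by $\alpha$ (resp.\ $\alpha/k$), and the limits come from testing with a function equal to $\beta$ (resp.\ $\beta_i$, i.e.\ $\eps_i$ times a function at height $\beta/\sqrt{k}$, exactly the paper's competitor) on a subset exhausting $\Omega$. If anything, you are slightly more careful than the paper in keeping the test function between $0$ and the maximizing level so that the potential term is nonnegative off $\Omega_\eta$.
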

\begin{proof}
  We prove the result for $m_{i,\E}^{\lambda}$, $i\geq 2$; similar
computations hold for $m_{1}^{\lambda}$.  By \eqref{e:ipotesiI} we have
$$
m_{i,\E}^{\lambda} \geq-\lambda \int_\Omega\left(\int_0^{v_i(x)}f_{i,\E}(s)ds\right) dx
\geq -\lambda\frac{\alpha}{k}|\Omega|.
$$
We are left to show that
  for any $\delta>0$ there exists $\phi\in H^1_0(\Omega)$ such that
$$
\lim_{\lambda\to\infty} \lambda^{-1}J_{i,\E}^{\lambda}(\phi)\leq
-\frac{\alpha}{k}|\Omega|+\delta.
$$
Let $\phi\in H^1_0(\Omega)$ such that
$\phi\geq 0$ a.e. in $\Omega$ and
$\phi=\beta/\sqrt{k}$ on a set $\Omega'\subset\Omega$ satisfying
$|\Omega'|>|\Omega|-\delta$. Then
 \begin{align*}
   \lambda^{-1}J_{i,\E}^{\lambda}(\eps_i\phi)&
   \leq\lambda^{-1}\frac{\eps_i^2}2
   \int_\Omega|\nabla\phi|^2-\frac{\alpha}{k}|\Omega'| \leq
   C\lambda^{-1}+\frac{\alpha}{k}(-|\Omega|+\delta)
 \end{align*}
for some $C>0$, and the result follows for $\lambda$ large.
\end{proof}
According to the lemma above, we define $\lambda_0$ as the smallest
positive number which is greater than $\lambda_1(\Omega)$ and
for which the following inequality holds
\begin{equation}\label{eq:lambda}
\lambda^{-1}m_{1}^{\lambda}<-\alpha|\Omega|\bigg(1-\frac{1}{2k}\bigg),\qquad \forall\lambda>\lambda_0.
\end{equation}
\begin{lemma}\label{l:upper1}
 Let  $H\in C^1(\R^k)$ satisfy
  (H1--2).
 Let $(u_1,\dots,u_k)$ be a weak $[H^1_0(\Omega)]^k$-solution of the
  system (\ref{eq:sistema_con_H}). Then
$$
0\leq u_i(x)\leq \beta_i,
\quad\text{for a.e. $x\in\Omega$ and all $i=1,\dots,k$}.
$$
\end{lemma}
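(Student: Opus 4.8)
The plan is a standard maximum-principle-type argument applied componentwise, using the sign and structure conditions (H1)--(H2) and the properties (F1)--(F2) of $g$ to control $f_{i,\E}$ on the relevant ranges.

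First I would establish nonnegativity. Fix $i$ and test the $i$-th equation of \eqref{eq:sistema_con_H} with the negative part $u_i^-=\max\{-u_i,0\}\in H^1_0(\Omega)$. On the set $\{u_i<0\}$ we have $f_{i,\E}(u_i)=0$ by (F1) (recall $g$ vanishes on $(-\infty,0]$ and $f_{i,\E}$ is a rescaling of $g$), so the reaction term drops. For the coupling term, by (H2) we have $u_i\,\partial H/\partial u_i\geq 0$, hence on $\{u_i<0\}$ it follows that $\partial H/\partial u_i\leq 0$; multiplying the equation by $-u_i^-$ (which is $+u_i$ on $\{u_i<0\}$, i.e.\ negative there) and integrating, one gets
\[
\int_\Omega |\nabla u_i^-|^2 = -\lambda\int_\Omega f_{i,\E}(u_i)\,u_i^-\,dx + \vk\int_\Omega \frac{\partial H}{\partial u_i}(u_1,\dots,u_k)\,u_i^-\,dx \leq 0,
\]
where both terms on the right are $\leq 0$ (the first is zero, the second because $\partial H/\partial u_i\le0$ where $u_i^->0$). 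Therefore $\nabla u_i^-=0$, and since $u_i^-\in H^1_0(\Omega)$ we conclude $u_i^-\equiv0$, i.e.\ $u_i\ge0$ a.e.

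Next I would establish the upper bound $u_i\le\beta_i$. Test the $i$-th equation with $(u_i-\beta_i)^+=\max\{u_i-\beta_i,0\}\in H^1_0(\Omega)$ (this is admissible as a test function since $\beta_i>0$ and $(u_i-\beta_i)^+$ vanishes on $\partial\Omega$). This yields
\[
\int_\Omega |\nabla (u_i-\beta_i)^+|^2 = \lambda\int_\Omega f_{i,\E}(u_i)\,(u_i-\beta_i)^+\,dx - \vk\int_\Omega \frac{\partial H}{\partial u_i}(u_1,\dots,u_k)\,(u_i-\beta_i)^+\,dx.
\]
On the set $\{u_i>\beta_i\}$ we have $f_{i,\E}(u_i)<0$ by \eqref{e:ipotesiI} (or (F2) rescaled), so the first integral is $\le0$; and since all components are nonnegative by the previous step, (H2) gives $\partial H/\partial u_i\ge0$ (on $\{u_i>\beta_i>0\}$ we divide the nonnegativity of $u_i\,\partial H/\partial u_i$ by $u_i>0$), so the second integral is $\ge0$ and enters with a minus sign. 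Hence the right-hand side is $\le0$, forcing $\nabla(u_i-\beta_i)^+=0$ and thus $(u_i-\beta_i)^+\equiv0$, i.e.\ $u_i\le\beta_i$ a.e. This holds for every $i=1,\dots,k$, which is the claim.

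The argument is essentially routine; the only mild subtlety is making sure the reaction term $f_{i,\E}$ has the right sign on the relevant level sets $\{u_i<0\}$ and $\{u_i>\beta_i\}$, which is exactly what (F1)--(F2) and the scaling in \eqref{eq:lefi}--\eqref{e:ipotesiI} provide, and that $(u_i-\beta_i)^+$ and $u_i^-$ are legitimate $H^1_0$ test functions — both standard. I do not anticipate a genuine obstacle here; this lemma is a preparatory $L^\infty$ bound feeding into the later variational comparisons.
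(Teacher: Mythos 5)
Your proof is correct and follows exactly the paper's argument: testing the $i$-th equation with $-u_i^-$ (using (F1) and (H2)) for nonnegativity, and with $(u_i-\beta_i)^+$ (using (F2)/\eqref{e:ipotesiI} and (H2)) for the upper bound. You simply spell out the sign bookkeeping that the paper leaves implicit; no discrepancy to report.
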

\begin{proof}
  Testing (\ref{eq:sistema}) with $-u_i^-$ and using (F1) and (H2), we obtain
  that $u_i\geq0$ a.e. in $\Omega$ for all $i=1,2,\dots,k$. On the
  other hand, by testing (\ref{eq:sistema}) with $(u_i-\beta_i)^+$ and
  using (\ref{e:ipotesiI}) and (H2), we deduce the required inequality.
\end{proof}
\begin{lemma}\label{l:upper}
 Let $\Omega$ be a bounded Lipschitz domain
  satisfying (D1--2).
If $u\in H^1_0(\Omega)$ weakly solves
$$
-\Delta u\leq  \lambda g(u), \quad\text{in }\Omega,
$$
then, for  a.e. $x\in\Omega$,
$$
u(x_1,\dots,x_N)\leq \gamma\bigg(x_1^2-m^2\sum_{i=2}^N x_i^2\bigg).
$$
with $\gamma=\frac{\lambda\,\max_{[0,\beta]}
  g}{2(m^2(N-1)-1)}$.
\end{lemma}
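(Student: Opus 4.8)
The plan is to deduce the pointwise bound from a comparison argument against the explicit quadratic barrier
\[
w(x)=\gamma\Big(x_1^2-m^2\sum_{i=2}^N x_i^2\Big),\qquad \gamma=\frac{\lambda\,\max_{[0,\beta]}g}{2\big(m^2(N-1)-1\big)}.
\]
First I would collect the elementary facts that make $w$ a suitable supersolution. Since $m>1$ and $N\ge 2$ we have $m^2(N-1)-1\ge m^2-1>0$, so $\gamma$ is well defined and $\gamma\ge 0$ (note $\max_{[0,\beta]}g\ge g(0)=0$). A direct differentiation gives $\Delta w=2\gamma\big(1-m^2(N-1)\big)$, hence $-\Delta w=\lambda\,\max_{[0,\beta]}g$ in $\Omega$, classically and a fortiori weakly, $w$ being a polynomial. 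By (D1), every $x\in\Omega\subset T$ satisfies $0\le m\sqrt{\sum_{i\ge 2}x_i^2}\le x_1$, so $x_1^2\ge m^2\sum_{i\ge 2}x_i^2$ and therefore $w\ge 0$ on $\overline\Omega$. Finally --- the observation that lets the barrier control all values of $u$ --- one has $g(t)\le \max_{[0,\beta]}g=:M$ for \emph{every} $t\in\R$: indeed $g(t)=0\le M$ for $t\le 0$ by (F1), $g(t)\le M$ for $t\in[0,\beta]$ by definition of $M$, and $g(t)<0\le M$ for $t>\beta$ by (F2).

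Next I would run the comparison. Put $v=u-w\in H^1(\Omega)$. Integrating by parts (the boundary term vanishes against $H^1_0$ test functions) gives $\int_\Omega\nabla w\cdot\nabla\varphi=\lambda M\int_\Omega\varphi$ for all $\varphi\in H^1_0(\Omega)$; subtracting this from the weak inequality $\int_\Omega\nabla u\cdot\nabla\varphi\le\lambda\int_\Omega g(u)\varphi$ satisfied by $u$ for all $0\le\varphi\in H^1_0(\Omega)$, and using $g(u)\le M$ pointwise, I obtain
\[
\int_\Omega\nabla v\cdot\nabla\varphi\,dx\;\le\;\lambda\int_\Omega\big(g(u)-M\big)\varphi\,dx\;\le\;0\qquad\text{for all }0\le\varphi\in H^1_0(\Omega).
\]
Testing with $\varphi=v^+$ (admissible, see below) and using $\nabla v\cdot\nabla v^+=|\nabla v^+|^2$ a.e. yields $\int_\Omega|\nabla v^+|^2\,dx\le 0$, so $\nabla v^+\equiv 0$; since $\Omega$ is bounded and $v^+\in H^1_0(\Omega)$, the Poincar\'e inequality forces $v^+\equiv 0$, i.e.\ $u\le w$ a.e.\ in $\Omega$, which is exactly the asserted estimate.

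The only point deserving care --- and the one place where the domain hypothesis is actually used --- is the admissibility of $v^+=(u-w)^+$ as a test function, i.e.\ $v^+\in H^1_0(\Omega)$. This follows from $w\ge 0$ on $\overline\Omega$ together with the Lipschitz regularity of $\Omega$: the trace of $u\in H^1_0(\Omega)$ on $\partial\Omega$ is zero, so the trace of $v=u-w$ equals $-w|_{\partial\Omega}\le 0$, hence the trace of $v^+$ vanishes and, $\Omega$ being Lipschitz, $v^+\in H^1_0(\Omega)$; equivalently, $0\le v^+\le u^+$ a.e.\ (because $w\ge 0$) with $u^+\in H^1_0(\Omega)$, and one invokes the lattice property of $H^1_0$. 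This is a routine verification, so I expect no genuine obstacle: the substance of the lemma lies entirely in the choice of the quadratic barrier $w$ and in the global bound $g\le M$ granted by (F1)--(F2).
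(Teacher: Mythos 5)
Your proof is correct and follows essentially the same route as the paper: comparison with the same quadratic barrier $w=\tilde u$, using $-\Delta w=\lambda\max_{[0,\beta]}g$, the global bound $g\le\max_{[0,\beta]}g$ from (F1)--(F2), nonnegativity of $w$ on $\overline\Omega$ via (D1), and testing the difference with $(u-w)^+$ (the paper's $-(\tilde u-u)^-$). Your explicit verification that $(u-w)^+\in H^1_0(\Omega)$ is a welcome detail the paper leaves implicit, but it does not change the argument.
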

\begin{proof}
  Let us denote the right hand side of the inequality by $\tilde
  u(x_1,\dots,x_N)$.  By simple computations, noticing that $\tilde u$
  is nonnegative on the boundary of $T$ and $\Omega\subset T$, it is easy to
  verify that
$$
\begin{cases}
-\Delta (\tilde u-u) \geq \lambda\,\max_{[0,\beta]} g-\lambda g(u)
\geq 0,&\text{in }\Omega,\\
\tilde u-u \geq 0,&\text{on }\partial\Omega.
\end{cases}
$$
Testing the above inequality with $-(\tilde u-u)^-$ we deduce that
$u(x)\leq \tilde u(x)$ for a.e. $x\in\Omega$.
\end{proof}
\section{Proof of the main results.}
\subsection{Proof of Theorem \ref{th:nonexistence}}\label{pf:nonexhistence}
For $U\in\U$, let ${\mathcal E}(U)$ be defined as in
\eqref{eq:freeenergy} with $F_i(s)=\int_0^s f(t)\,dt$ for all $i$ and
let
$$
\mu=\inf_{u\in H^1_0(\Omega)} \int_\Omega\bigg(\frac12|\nabla
u(x)|^2-F(u)\bigg)dx,
$$
By taking $k$-tuples of the form $(u,0,\dots,0)$, we realize that
$$
\inf_{U\in\U\cap H^1_0(\Omega)}{\mathcal E}(U)\leq \mu.
$$
Assume there exists a minimizing $k$-tuple $V=(v_1,\dots,v_k)\in\U\cap
H^1_0(\Omega)$ and define $\tilde V=(\tilde v_i,\dots,\tilde v_k)$
where $\tilde v_1=\sum_i v_i$ and $\tilde v_i=0$ for all
$i>1$. Then
$$
\mu\geq {\mathcal E}(V)={\mathcal E}(\tilde
V)=\int_\Omega\bigg(\frac12|\nabla \tilde v_1(x)|^2-F(\tilde
v_1(x))\bigg)dx\geq \mu
$$
implying in particular that $\tilde v_1$ is a weak solution of
$-\Delta u= f(u)$ (see e.g. \cite{am}). By the Strong Maximum Principle,
we deduce that either $\tilde v_1\equiv 0$ (and then $v_i\equiv 0$ for all
$i=1,\dots,k$)
or $\tilde v_1(x)> 0$ for a.e. $x\in\Omega$ and then
 $k-1$ components of $V$ must be null.

\subsection{Proof of Theorem \ref{th:main-sistema}}
Let $\lambda_0$ as in \eqref{eq:lambda} and, for every fixed
$\lambda>\lambda_0$, let us consider the minimization problem
$$
\Lambda=\inf_{U\in [H_0^1(\Omega)]^k}I^{\lambda,\vk}_{\E}(U),
$$
where $I^{\lambda,\vk}_{\E}:[H_0^1(\Omega)]^k\to(-\infty,+\infty]$ is defined
in (\ref{eq:Ikappa}).

\smallskip\noindent
{\bf Step 1.} $ \Lambda$ is achieved.
We first observe that, by (H1) and Lemma \ref{l:limiti},
$$
I^{\lambda,\vk}_{\E}(U)\geq m_{1}^{\lambda}+\sum_{i=2}^km_{i,\E}^{\lambda}\geq -\lambda
\alpha|\Omega|\bigg(1+\frac{k-1}{k}\bigg)
$$
for all $U\in [H_0^1(\Omega)]^k$, hence $\Lambda>-\infty$.  Let
$\big\{V_n=(v_1^n,\dots,v_k^n)\big\}_{n\in\N}$ be a minimizing
sequence, i.e.  $\lim_{n\to+\infty}I^{\lambda,\vk}_{\E}(V_n)=\Lambda$.
Notice that we can choose $V_n$ such that $v_i^n\geq 0$ a.e. in
$\Omega$ for all $i=1,\dots,k$; otherwise we take
$((v_1^n)^+,\dots,(v_k^n)^+)$ with $(v_i^n)^+:=\max\{v_i^n,0\}$, which
is another minimizing sequence. Indeed, in view of (\ref{e:ipotesiI})
and (H2), the function $t\mapsto
H(s_1,\dots,s_{i-1},t,s_{i+1},\dots,s_k)$ has a global minimum in
$t=0$ thus yielding $H(v_1^+,\dots,v_k^+)\leq
H(v_1,\dots,v_k)$. Besides, since the function $t\mapsto
H(s_1,\dots,s_{i-1},t,s_{i+1},\dots,s_k)$ is non decreasing in
$(0,+\infty)$ from (F2) and (H2), letting $U_n=(u_1^n,\dots,u_k^n)$ with
$u_i^n=\min\{v_i^n,\beta_i\}$, we have that
$I^{\lambda,\vk}_{\E}(U_n)\leq I^{\lambda,\vk}_{\E}(V_n)$.  Then also
$\big\{U_n\big\}_{n\in\N}$ is a minimizing sequence.

Since $\big\{U_n\big\}_{n\in\N}$ is a minimizing sequence and it is
uniformly bounded, it is easy to realize that
$\big\{U_n\big\}_{n\in\N}$ is bounded in $\big[H^1_0(\Omega)\big]^k$;
hence there exists a subsequence, still denoted as
$\big\{U_n\big\}_{n\in\N}$, which converges to some
$U=(u_1,\dots,u_k)\in \big[H^1_0(\Omega)\big]^k$ weakly in
$\big[H^1_0(\Omega)\big]^k$, strongly in $\big[L^2(\Omega)\big]^k$ and
a.e. in $\Omega$.  A.e.  convergence implies that $0\leq u_i\leq
\beta_i$ a.e. in $\Omega$.  From the Dominated Convergence Theorem, it
follows that
\begin{align*}
  &\lim_{n\to+\infty}\int_{\Omega}
  F_{i,\E}(u_i^n(x))\,dx=\int_{\Omega} F_{i,\E}(u_i(x))\,dx,
  \quad\text{for every $i=1,\dots,k$},\\
  &\lim_{n\to+\infty}\int_{\Omega} H(u_1^n(x), \dots, u_k^n(x))\,dx
  =\int_{\Omega} H(u_1(x), \dots, u_k(x))\,dx,
\end{align*}
 which, together with  weak lower
semi-continuity, yields
\[
\Lambda\leq I^{\lambda,\vk}_{\E}(U)\leq
\liminf_{n\to+\infty}I^{\lambda,\vk}_{\E}(U_n)=
\lim_{n\to+\infty}I^{\lambda,\vk}_{\E}(U_n)=\Lambda,
\]
thus proving that $U$ attains $\Lambda$.

\smallskip\noindent {\bf Step 2.} If $U=(u_1,\dots,u_k)$ is a
minimizer attaining $\Lambda$, then
\begin{equation}\label{eq:gamma}
  u_1\not\equiv 0,\qquad\mbox{and}\qquad J_1^\lambda(u_1)
  <-\frac{\alpha\lambda|\Omega|}{2k}<0.
\end{equation}
Indeed, letting $u\in H^1_0(\Omega)$ such that $ m_{1}^{\lambda}=J_1^\lambda(u)$,
 we have
$$\Lambda\leq I^{\lambda,\vk}_{\E}(u,0,\dots,0)= J_1^\lambda (u)=m_{1}^{\lambda}.$$
Besides, appealing to Lemma \ref{l:limiti} we learn that
$$
\sum_{i=2}^kJ_{i,\E}^{\lambda}(u_i)\geq
\sum_{i=2}^km_{i,\E}^{\lambda}\geq -\alpha\lambda|\Omega|\frac{k-1}k.
$$
Since $\Lambda=\Ik(U)\geq
J_{1}^{\lambda}(u_1)+\sum_{i=2}^kJ_{i,\E}^{\lambda}(u_i) $ by (H1),
choosing $\lambda$ as in  (\ref{eq:lambda}) we finally have
\begin{equation*}
J_1^{\lambda}(u_1)\leq m_{1}^{\lambda}-\sum_{i=2}^kJ_{i,\E}^{\lambda}(u_i)
<-\alpha\lambda|\Omega|\bigg(1-\frac1{2k}-\frac{k-1}k\bigg)=
-\frac{\alpha\lambda|\Omega|}{2k}<0,
\end{equation*}
thus proving the estimate in (\ref{eq:gamma}), which in particular ensures
$u_1\not\equiv 0$.

\smallskip\noindent {\bf Step 3.}  If $U=(u_1,\dots,u_k)$ is a
minimizer attaining $\Lambda$, then $u_i\not\equiv 0$ for
all $i=1,\dots,k$. We already know by step 2 that $u_1\not\equiv 0$.
Moreover, by standard Critical
Point Theory, $U=(u_1,\dots,u_k)$ is a weak  solution to
(\ref{eq:sistema_con_H}), and hence, by Lemma \ref{l:upper1},
 $0\leq u_i(x)\leq\beta_i$ for all $i=1,\dots,k$
and a.e. $x\in\Omega$.
Assume by contradiction that
\begin{equation}\label{eq:imanca}
u_i\equiv 0\qquad\mbox{ for some }i>1.
\end{equation}
Let us fix $x_0\in\Omega$ and $r,R>0$ such that
$B(x_0,r)\subset\Omega\subset B(0,R)$ and define
$$w_i(x)=\frac{\eps_i}{\sqrt{k}}u_1({\eps_i}^{-1}(x-x_0)).
$$
Notice that $w_i\in H^1_0(A_i)$ where
$$
A_i=\bigg\{x\in\R^N: \frac{x-x_0}{\eps_i}\in\Omega\bigg\}.
$$
Moreover $|A_i|=\eps_i^N|\Omega|$ and
$A_i\subset B(x_0,R\eps_i)$,
which implies $A_i\subset\Omega$ if $ \eps_i<\frac{r}{R}$.
In particular $w_i\in H^1_0(\Omega)$ if $\eps_i<\frac{r}{R}$.
From Lemma \ref{l:upper1},
\begin{equation}\label{eq:uppwi}
0\leq w_i(x)\leq \frac{\beta\,\eps_i}{\sqrt{k}}
\quad\text{for a.e. }x\in A_i.
\end{equation}
Since
$$
\int_0^{w_i(x)}f_{i,\E}(s)ds=\frac{1}{\sqrt{k}\eps_i}\int_0^{w_i(x)}
g(\sqrt{k}\,\eps_i^{-1}s)ds=
\frac{1}{k}\int_0^{u_1(\eps_i^{-1}(x-x_0))}g(t)dt,
$$
from (\ref{eq:gamma}) it follows
\begin{align}\label{eq:Ji}
  J_{i,\E}^\lambda(w_i)&=\int_{A_i}\bigg(\frac12|\nabla w_i(x)|^2-
  \lambda\int_0^{w_i(x)}f_{i,\E}(s)ds\bigg)dx\\\nonumber
  &=\int_{A_i}\bigg(\frac1{2k}\bigg|\nabla
  u_1\bigg(\frac{x-x_0}{\eps_i}\bigg)\bigg|^2-
  \frac\lambda k\int_0^{u_1(\eps_i^{-1}(x-x_0))}
  g(s)ds\bigg)dx\\\nonumber
  &=\frac{\eps_i^N}{k}\int_\Omega\bigg(\frac12|\nabla
  u_1(x)|^2-\lambda\int_0^{u_1(x)}g(s)ds\bigg)dx\\\nonumber
  &=\frac{\eps_i^N}{k}J_1^\lambda(u_1)
<-\eps_i^N\frac{\alpha\,\lambda\,|\Omega|}{2k^2}.
\end{align}
We now claim that, letting
$$
W=(u_1,\dots,u_{i-1},w_i,u_{i+1},\dots,u_k),
$$
there holds $\Ik(W)<\Ik(U)$ provided $\eps_i$ is small enough.
Indeed
\begin{align}\label{eq:W-U}
  \Ik(W)-\Ik(U)=J_{i,\E}^\lambda(w_i)+ \k\int_{A_i}\big(H(W(x))
  -H(U(x))\big)\,dx.
\end{align}
From (H2) it follows that the function $t\mapsto
H(s_1,\dots,s_{i-1},t,s_{i+1},\dots,s_k)$ is non decreasing in $(0,+\infty)$,
hence, in view of (\ref{eq:uppwi}),
\begin{align}\label{eq:stimHWU}
0&\leq H(W(x))
  -H(U(x))\\
\notag&=H(u_1(x),\dots,u_{i-1}(x),w_i(x),u_{i+1}(x),
\dots,u_k(x))\\
\notag&\hskip4cm-H(u_1(x),\dots,u_{i-1}(x),0,u_{i+1}(x),\dots,u_k(x))\\
\notag&\leq H\Big(u_1(x),\dots,u_{i-1}(x),\frac{\beta\eps_i}{\sqrt{k}},u_{i+1}(x),
\dots,u_k(x)\Big)\\
\notag&\hskip4cm-H(u_1(x),\dots,u_{i-1}(x),0,u_{i+1}(x),\dots,u_k(x)).
\end{align}
Since the restriction of $H$ to the cube $[0,\beta]^k$ is uniformly
continuous, it admits a modulus of continuity, i.e. there exists a function
$\omega:[0,+\infty)\to[0,+\infty]$ such that $\lim_{t\to0^+}\omega(t)=0$
and $|H(X)-H(Y)|\leq \omega(|X-Y|)$ for all $X,Y\in [0,\beta]^k$.
Hence, from (\ref{eq:stimHWU}) we derive
\begin{align}\label{eq:stHWU2}
0\leq H(W(x))
  -H(U(x))\leq \omega\bigg(\frac{\beta\eps_i}{\sqrt{k}} \bigg).
\end{align}
Combining (\ref{eq:Ji}), (\ref{eq:W-U}), and (\ref{eq:stHWU2}), we obtain that
$$
  \Ik(W)-\Ik(U)<-\eps_i^N\frac{\alpha\,\lambda\,|\Omega|}{2k^2}
+\varkappa{\eps_i}^N|\Omega|\omega\bigg(\frac{\beta\eps_i}{\sqrt{k}} \bigg)
$$
which is strictly negative if $\eps_i$ is small enough to
satisfy
\begin{equation}\label{eq:epsi}
\omega\bigg(\frac{\beta\eps_i}{\sqrt{k}} \bigg)<\frac{\alpha\,\lambda}
{2k^2\varkappa}.
\end{equation}
This concludes the proof.

\subsection{Proof of Theorem \ref{th:main-partition}}
Let $k\geq 2$ and $\lambda\geq\lambda_0$ be fixed as in \eqref{eq:lambda}.\\
Assume by contradiction that the minimum of ${\mathcal
  E}^{\lambda}_{\E}$ on $\mathcal{U}\cap H^1_0(\Omega)$ is achieved by a $k$--tuple
$U=(u_1,\dots,u_k)$ with only one nontrivial component. Reasoning as in
\eqref{eq:gamma} it is easy to prove that $u_1\not\equiv 0$, hence we
can assume $u_j\equiv 0$ for all $j>1$; notice that $u_1$ is in
particular a global minimizer of $J^\lambda_1$.  The strategy leading
a contradiction consists in modifying $u_1$ near the origin in order to
create a new $k$-tuple $V\in\mathcal{U}$ with a second non--vanishing
component and
 $${\mathcal E}^{\lambda}_{\E}(V)<{\mathcal E}^{\lambda}_{\E}(U)$$
 for $\E$ small.
To this aim,
let $\phi\in C^{2}(\R)$ be a cut--off function such that $0\leq\phi\leq 1$ and
$$\phi(s)=\begin{cases} 0&  s \leq 1,\\
1 & s\geq 2.
\end{cases}
$$
Given $\delta\in (0,\delta_0)$ we set
$$
\Omega_\delta=\{x\in \Omega: x_1<\delta\},\qquad \Omega_\delta'=\{x\in
\Omega: \delta < x_1<2\delta\}.
$$
Let us  define
$$
u_{1,\delta}(x)=\phi(\delta^{-1}x_1)u_1(x),
$$
which vanishes on $\Omega_\delta$ and belongs to $H^1_0(\Omega)$
since, for $x\in \Omega_{2\delta}$,
$$
\nabla u_{1,\delta}(x)=
\delta^{-1}\phi'(\delta^{-1}x_1)u_1(x)(1,0,\dots,0)
+\phi(\delta^{-1}x_1)\nabla u_1(x).
$$
The growth of energy occurring when substituting $u_1$ in the
minimizing $k$-tuple with $u_{1,\delta}$ can be estimated as follows.
Observing first that  $F_1(u_{1,\delta})>0$ by Lemma \ref{l:upper1},
and $F_1(s)\leq \Gamma s^2$ for some $\Gamma>0$ by (F1--2), we have
\begin{align*}
&J^\lambda_1(u_{1,\delta})-J^\lambda_1(u_1)\\
&\leq\int_{\Omega_{2\delta}}\frac12\bigg(|\nabla u_{1,\delta}(x)|^2-|\nabla u_1(x)|^2\bigg)\,dx+
\lambda\int_{\Omega_{2\delta}}F_1(u_1(x))\,dx\\
&\leq \int_{\Omega_{\delta}'}\bigg(\frac12\delta^{-2}\big(\phi'(\delta^{-1}x_1)\big)^2u_1^2(x)
+\delta^{-1}\phi(\delta^{-1}x_1)\phi'(\delta^{-1}x_1)u_1(x)\frac{\partial }{\partial x_1}u_1(x)\bigg)\,dx\\
&\qquad+\Gamma\lambda\int_{\Omega_{2\delta}}u_1^2(x)\,dx.
\end{align*}
An integration by parts provides
\begin{align*}
  2&\int_{\Omega_{\delta}'}\phi(\delta^{-1}x_1)
  \phi'(\delta^{-1}x_1)u_1(x)\frac{\partial}{\partial x_1}u_1(x)\,dx=
  \int_{\Omega_{\delta}'}\phi(\delta^{-1}x_1)\phi'(\delta^{-1}x_1)
  \frac{\partial}{\partial x_1}u_1^2(x)\,dx\\
  &=
  \int_{\partial\Omega_{\delta}'}\phi(\delta^{-1}x_1)\phi'(\delta^{-1}x_1)
  u_1^2(x)\,d\sigma\\
  &\qquad-\frac{1}{\delta}
  \int_{\Omega_{\delta}'}\bigg((\phi'(\delta^{-1}x_1))^2+
  \phi(\delta^{-1}x_1)\phi''(\delta^{-1}x_1)\bigg)u_1^2(x)\,dx\\
  &\leq \|\phi'\|_{L^\infty(\R)}\int_{\partial\Omega_{\delta}'}
  u_1^2(x)\,d\sigma+\frac{1}{\delta}(\|\phi'\|^2_{L^\infty(\R)}+
  \|\phi''\|_{L^\infty(\R)})\int_{\Omega_{\delta}'}u_1^2(x)\,dx\\
  &\leq M \int_{\partial\Omega_{\delta}'}u_1^2(x)\,d\sigma
  +\frac{M}{\delta}\int_{\Omega_{\delta}'}u_1^2(x)\,dx,
\end{align*}
for some $M=M(\phi)>0$.  Appealing to Lemma \ref{l:upper} we have that
$u_1(x)\leq \gamma x_1^2\leq 4\gamma\delta^2 $ for all $x\in
\Omega_{2\delta}$. Hence we have the following estimate:
\begin{align*}
  J^\lambda_1(u_{1,\delta})&-J^\lambda_1(u_1)\\
  &\leq |\Omega_{2\delta}|\left(\frac12\delta^{-2}M(4\gamma\delta^2)^2
    +\Gamma\lambda(4\gamma \delta^2)^2+\frac12\delta^{-2}M(4\gamma
    \delta^2)^2\right)\\
&\qquad  +\frac12\delta^{-1}M|\partial \Omega_{\delta}'|_{N-1}(4\gamma\delta^2)^2\\
  &\leq C\delta^{N+2},
\end{align*}
for some $C>0$.  Now fix any $i>1$, set
\begin{equation}\label{eq:lev}
v_i(x)=\frac{\eps_i}{\sqrt{k}}u_1({\eps_i}^{-1}x),
\end{equation}
and define
$V=(v_1,\dots,v_k)$ where
\begin{equation}\label{eq:leu}
v_1(x)=u_{1,\eps_i}(x)=\phi(\eps_i^{-1}x_1)u_1(x),
\end{equation}
and $v_j\equiv 0$ if $j\neq 1,i$.
Notice that $v_1\cdot v_i=0$ by construction, so that $V\in\mathcal{U}$ and hence $H(v_1,\dots,v_k)=0$ by (H3).
Besides, by the above computations with $\delta=\eps_i$ and arguing as in
\eqref{eq:Ji} to estimate $J^\lambda_{i,\E}(v_i)$, we have
\begin{align}\label{eq:scendo}
{\mathcal E}^{\lambda}_{\E}(V)-{\mathcal E}^{\lambda}_{\E}(U)&=
J^\lambda_1(v_1)-J^\lambda_1(u_1)+J^\lambda_{i,\E}(v_i)\\
\nonumber &\leq
\frac{\eps_i^N}{k}J^\lambda_1(u_1)+C\eps_i^{N+2}\\\nonumber &\leq
\eps_i^N\bigg(-\frac{\alpha\,\lambda\,|\Omega|}{2k^2}+C\eps_i^{2}\bigg),
\end{align}
which is strictly negative for $\eps_i$ small and provides a contradiction.
\subsection{Proof of Theorem \ref{th:system2}}$\;$\\
\textbf{Coexistence.}  Let $k=2$ and $\vk>0$ be fixed.  Arguing as in
the proof of Theorem \ref{th:main-sistema}, we immediately obtain
that the global minimum of $\Ik$ is achieved for all $\vk>0$. Assume by
contradiction that the global minimum of $\Ik$ on $[H^1_0(\Omega)]^2$
is achieved by a pair of the form $U^\vk=(u_1^\vk,0)$ (recall that by
\eqref{eq:gamma} the first component of any minimizer must be
nontrivial). Arguing exactly as in the proof of Theorem
\ref{th:main-partition}, we define a new pair
$V^\vk=(v_1^\vk,v_2^\vk)$ by setting
$$
v_1^\vk(x)=\phi(\eps_2^{-1}x_1)u_1^\vk(x),\qquad v_2^\vk(x)=
\frac{\eps_2}{\sqrt{2}}u_1^\vk({\eps_2}^{-1}x),
$$
as in \eqref{eq:leu} and \eqref{eq:lev} respectively.  Since $v_1^\vk$
and $v_2^\vk$ have disjoint supports, by (H3) it holds $H(v_1^\vk,v_2^\vk)=0$
and no interaction term appears in the evaluation of
$\Ik(V^\vk)$. Hence
$$
\Ik(V^\vk)-\Ik(U^\vk)=J^\lambda_1(v_1^\vk)-J^\lambda_1(u_1^\vk)+J^\lambda_{2,\E}(v_2^\vk),
$$
and estimating as in \eqref{eq:scendo}, we find that
$\Ik(V^\vk)-\Ik(U^\vk)$ is strictly negative for $\eps_2$ sufficiently
small, a contradiction.  Hence, if $\eps_2$ is small enough, for any
positive value of $\vk$ the competing system \eqref{eq:sistema_con_H}
has a solution $U^\vk=(u_1^\vk,u_\vk^2)$ with both nonzero components
which minimizes the energy $\Ik$.

\smallskip\noindent \textbf{Asymptotic analysis.}  Let $\lambda$ and
$\E$ be fixed and consider $U^{\vk}=(u_1^{\vk},u_2^{\vk})$ such that
$$\Ik(U^{\vk})=\Lambda^{\vk}=\inf_{U\in [H_0^1(\Omega)]^2}I^{\lambda,\vk}_{\E}(U).$$
The convergence of $U^{\vk}$ to a minimizer of ${\mathcal E}^\lambda_{\E}$ on
$\mathcal{U}\cap H^1_0(\Omega)$ can be proven as in \cite[Theorem
2.3]{cf-neumann}, with minor changes. For the reader's convenience, we
report some details.  Notice first that evaluating $\Ik(U)$ for all
$U\in\mathcal{U}$ annihilates the interaction term in light of (H3),
so that
\begin{equation}\label{eq:8}
\Lambda^{\vk}\leq \min\{ {\mathcal E}^\lambda_{\E}(U),\;U\in\mathcal{U}\}=:c,
\end{equation}
hence we have
\[
\|U^{\vk}\|_{[H^1_0(\Omega)]^2}^2\leq 2
\Lambda^{\vk}+2|\Omega|\lambda \sum_i \int_0^{\beta_i} f_{i,\E}(s)ds \leq
2c+3|\Omega|\lambda\alpha.
\]
Then $u_i^{\vk}$ is bounded in $H^1_0(\Omega)$ uniformly with respect
to $\vk$, and there exists $u_i\geq 0$ such that, up to subsequences,
$u_i^{\vk}\rightharpoonup u_i$ weakly in $H^1_0(\Omega)$ and
$u_i^{\vk}(x)\to u_i$ for almost every $x$ as $\vk\to+\infty$.  Let us
now multiply the equation of $u_i^{\vk}$ times $u_i^{\vk}$ on account
of the boundary conditions: then $
\vk\int_{\Omega}u_i^\vk\frac{\partial H}{\partial
  s_i}(u^{\vk}_1,u^{\vk}_2)$ is bounded uniformly in $\vk$,  giving
\[
\int_{\Omega}u_i^\vk(x)\frac{\partial H}{\partial
  s_i}(u^{\vk}_1(x),u^{\vk}_2(x))\,dx\to 0,\qquad \text{as
}\vk\to\infty.
\]
By (H2) and the Dominated Convergence Theorem (recall that $0\leq
u_i^{\vk}\leq \beta_i$) we infer that $$u_i(x)\frac{\partial
  H}{\partial u_i}(u_1(x),u_2(x))=0 \qquad \text{a.e. } x\in\Omega,$$
implying in light of (H3) that $u_1(x)\cdot u_2(x)=0$ and hence
$U=(u_1,u_2)\in\U$.  Now notice that for $\vk\leq \vk'$ it holds
$\Lambda^{\vk}\leq \Lambda^{\vk'}\leq c$, hence the following chain of
inequalities holds:
\begin{align*}
  c&\geq\lim_{\vk\to\infty}\Lambda^{\vk}
  =\lim_{\vk\to\infty}\Ik(U^{\vk})\\
  & = \limsup_{\vk\to\infty}\bigg[
  \sum_{i=1}^2\bigg\{\frac12\int_\Omega  |\nabla u_i^{\vk}(x)|^2 dx
  -\lambda\int_{\Omega} F_{i,\E}(u_i^{\vk}(x))\,dx\bigg\}\\
&\hskip6cm+
  \vk\int_{\Omega}H(u^{\vk}_1(x),u^{\vk}_2(x))dx\bigg]\\
  &\geq \limsup_{\vk\to\infty}
  \sum_{i=1}^2\bigg\{\frac12\int_\Omega|\nabla u_i^{\vk}(x)|^2 dx
  -\lambda\int_{\Omega} F_{i,\E}(u_i^{\vk}(x))\,dx\bigg\}\\
  &\geq \liminf_{\vk\to\infty}
  \sum_{i=1}^2\bigg\{\frac12\int_\Omega|\nabla u_i^{\vk}(x)|^2 dx
  -\lambda\int_{\Omega}F_{i,\E}(u_i^{\vk}(x))\,dx\bigg\}\\
  &\geq\sum_{i=1}^2\bigg\{\frac12\int_\Omega|\nabla u_i(x)|^2 dx
  -\lambda\int_{\Omega} F_{i,\E}(u_i(x))\,dx\bigg\}= {\mathcal E}^\lambda_{\E}(U)\geq
  c.
\end{align*}
Therefore all the above inequalities are indeed equalities. In
particular ${\mathcal E}^\lambda_{\E}(U)=c$, meaning that $U$ is a global
minimizer of ${\mathcal E}^\lambda_{\E}$.

Moreover, we learn that $\lim_{\vk\to+\infty}\|U^{\vk}\|_{[H^1_0(\Omega)]^2}=
\|U\|_{[H^1_0(\Omega)]^2}$ which implies that the weak convergence of
$U^{\vk}$ to $U$ is
actually strong in $[H^1_0(\Omega)]^2$.

Finally, to prove that both the components of $V$ are positive, we
appeal to Theorem \ref{th:main-partition} in the case $k=2$, ensuring
that for $\eps_2$ small any global minimizer of ${\mathcal E}^{\lambda}_{\E}$ on
$\mathcal{U}\cap [H^1_0(\Omega)]^2$ has two nontrivial components.


\end{document}